\theoremstyle{plain}
\newtheorem{theorem}{Theorem}[section]
\newtheorem{lemma}[theorem]{Lemma}
\newtheorem{remark}[theorem]{Remark}
\newtheorem{proposition}[theorem]{Proposition}
\theoremstyle{definition}
\newtheorem{definition}[theorem]{Definition}
\def\NN{{\mathbb N} }
\def\RR{{\mathbb R} }
\def\X{{\mathcal X} }
\def\to{\rightarrow}
\def\phi{{\varphi} }
\begin{document}

\title
{Neumann and Robin type boundary conditions in Fractional Orlicz-Sobolev spaces}

\author{Sabri Bahrouni}
\address[S. Bahrouni]{Department of Mathematics, Faculty of Sciences,
University of Monastir, 5019 Monastir, Tunisia}
\email{sabri.bahrouni@fsm.rnu.tn}

\author{Ariel M.  Salort}
\address[A. Salort]{Departamento de Matem\'atica, FCEyN - Universidad de Buenos Aires and
\hfill\break \indent IMAS - ºICET
\hfill\break \indent Ciudad Universitaria, Pabell\'on I (1428) Av. Cantilo s/n. \hfill\break \indent Buenos Aires, Argentina.}
\email[A.M. Salort]{asalort@dm.uba.ar}
\urladdr{http://mate.dm.uba.ar/~asalort}

\keywords{Fractional Orlicz-Sobolev spaces; Neumann and Robin problem; Three solutions; Eigenvalue problems. \\
\hspace*{.3cm} {\it 2010 Mathematics Subject Classifications}:
 46E30, 35R11, 45G05}

\begin{abstract}
In the first part of this article we deal with the existence of at least three non-trivial weak solutions of a nonlocal problem with nonstandard growth involving a nonlocal Robin type boundary condition. The second part of the article is devoted to study eigenvalues and minimizers of  several nonlocal problems for the fractional $g-$Laplacian $(-\Delta_g)^s$ with different boundary conditions, namely, Dirichlet, Neumann and Robin.
\end{abstract}
\maketitle
\tableofcontents
\numberwithin{equation}{section}

\section{Introduction}
In  the   recent years has been an increasing interest in studying non-local problems with $p-$structure due to its accurate description of models  involving anomalous diffusion.  In several branches of science have been observed some   phenomena having a non-local nature, which, nonetheless, do not obey a power-like growth law. See for instance \cite{AHK,  Cianchi, BR, FBS, KRV} and references therein.

The suitable operator to describe these kind of phenomena is the fractional $g-$Laplacian introduced in \cite{FBS} and defined as
\begin{equation} \label{g.laplacian}
(-\Delta_g)^s u:= \, \text{p.v.} \int_{\mathbb{R}^n} g\left( |D_s u|\right)\frac{D_s u}{|D_s u|} \frac{dy}{|x-y|^{n+s}},
\end{equation}
and defined in the principal value sense; here $G$ is a Young function such that $g=G'$ and $s\in (0,1)$ is a fractional parameter. The quantity $D_s u:=\frac{u(x)-u(y)}{|x-y|^s}$ is the \emph{$s-$H\"older quotient}.
 
Problems involving this operator have recently attracted some attention.  We refer the readers to \cite{ Cianchi, Cianchi2, Azroul, Sabri1, sabri2, sabri3, sabri4, FBS, Salort3, DNFBS, Salort2}. Observe that when $G(t)=t^p/p$, $p>1$, \eqref{g.laplacian} becomes the well-known \emph{fractional $p-$Laplacian operator}.

Given an open bounded domain $\Omega\subset \RR^n$ with smooth boundary ($\partial\Omega\in C^{0,1}$ is enough)  the first aim  of the present article is to study existence of nontrivial solutions of the following equation involving the nonlinearities $f$ and $h$ with homogeneous Robin \emph{boundary} condition on $\RR^n\setminus \Omega$
\begin{align} \label{eq}
\begin{cases}
(-\Delta_g)^s u+g(u)\frac{u}{|u|} =\lambda f(x,u)+\mu h(x,u)&\text{ in } \Omega\\
\mathcal{N}_g u +\beta(x)g(u)\frac{u}{|u|}=0 &\text{ in } \mathbb{R}^n \setminus \Omega.
\end{cases}
\end{align}
Here, we introduce a non-local normal derivative in this settings as
\begin{equation} \label{normal}
\mathcal{N}_g u(x):= \int_{\Omega} g\left( |D_s u|\right)\frac{D_s u}{|D_s u|} \frac{dy }{|x-y|^{n+s}}, \qquad x\in \mathbb{R}^n\setminus \bar\Omega,
\end{equation}
which can be seen as the natural generalization of the non-local derivative introduced in \cite{Dipierro}.

Nonlocal equations for the fractional $p-$Laplacian with boundary conditions involving  non-local normal derivatives have   been recently developed in the literature; see for instance \cite{Nicola, DPRS, DPS, Dipierro, Winkert, Dimitri, Bruno, W}.

Regarding existence of solutions to problem \eqref{eq} in the particular case of the  fractional $p-$Laplacian, there has been  some recent develops. In \cite{Dimitri}, under suitable conditions on the nonlinearities, the authors obtain existence of at most one positive solution by following the celebrated paper of Brezis-Oswald. The authors in \cite{ML}, for the same problem but with $\beta\equiv0$, and under suitable conditions on $f$, by using variational methods obtain existence of two positive solutions. It worths to be mention that the local counterpart of \eqref{eq} for Orlicz functions in the Dirichlet case was studied in \cite{Vilasi, Kristaly, Nguyen}. For some existence results in the nonlocal Orlicz case with Dirichlet boundary conditions see \cite{Azroul}.

Our first main scope is to provide conditions on the Young function $G$, on the nonlinearities $f$ and $h$, and over $\lambda, \mu$ and $\beta$ to ensure existence of at least three nontrivial (weak) solutions of \eqref{eq}. Our arguments are based in the celebrated result \cite{Ricceri} by B. Ricceri together with an \emph{integration by parts formula} related to the operator $(-\Delta_g)^s$.

The Young function $G=\int_0^tg(t)\,dt$ is assumed to satisfy the following growing condition
\begin{equation} \label{cond.intro} \tag{$G_1$}
1<p^-\leq \frac{tg(t)}{G(t)} \leq p^+<\infty \quad \forall t>0
\end{equation}
for fixed constants $p^\pm$. Moreover,  the following structural condition is assumed
\begin{equation} \label{G2} \tag{$G_2$}
t\mapsto G(\sqrt{t}),\ t\in[0,\infty[  \text{ is convex}.
\end{equation}
To ensure compactness we restrict ourselves to the  \emph{sub-critical case} of the fractional Orlicz-Sobolev embeddings:
\begin{equation} \label{G3} \tag{$G_3$}
 \displaystyle\int_{0}^{1}\frac{G^{-1}(\tau)}{\tau^{\frac{n+s}{n}}}d\tau<\infty\quad \text{and}\quad
\displaystyle\int_{1}^{+\infty}\frac{G^{-1}(\tau)}{\tau^{\frac{n+s}{n}}}d\tau=\infty.
\end{equation}

Here, $\lambda$ and $\mu$ are two positive real parameters in a suitable range and $\beta\in L^{\infty}(\mathbb{R}^n\backslash\Omega)$ is strictly positive. The nonlinearities $f,h\colon\Omega\times\mathbb{R}\rightarrow\mathbb{R}$  will be  suitable Carath\'eodory continuous functions assumed to belong to the class $\mathcal{A}$ defined as follows: $f\in \mathcal{A}$ if it fulfills the growth condition
\begin{equation} \tag{$f_1$} \label{f1}
|f(x,t)| \leq w(x) (1+m(|t|))\quad \text{for  }a.e \, x\in\Omega \text{ and for all}\, t\in\mathbb{R},
\end{equation}
where $w$ is a positive function such that $w\in L^{\infty}(\Omega)$ and $m=M'$, being $M$ a Young function 
decreasing essentially more rapidly than  the critical Sobolev function $G_*$, i.e., $M\prec\prec G_*$, being $G_*$ the critical function in the fractional Orlicz-Sobolev embedding (see section \ref{sec.sobolev} for details). We remark  that \eqref{f1} is fulfilled, for instance, if $|f(x,t)| \leq w(x) (1+|u|)^{q-1}$ for some $q\in (1,p^-_*)$, being $p^-_*:=\frac{np^-}{n-p^-}$.

From now on, we denote
$$
F(x,t)=\int_{0}^{t}f(x,s)ds,\qquad H(x,t)=\int_{0}^{t}h(x,s)ds, \qquad \mathcal{F}(u)=\int_\Omega F(x,u)\,dx,
$$
and we anticipate that the natural space to look for (weak) solutions of \eqref{eq} is given by (see Section \ref{sec.3} for details and motivations)
$$
\X=\left\{ u \text{ measurable} \colon \iint_{\RR^{2n}\setminus (\Omega^c)^2} G(|D_s u(x,y)|)\,d\mu + \int_\Omega G(|u|)\,dx + \int_{\RR^n\setminus \Omega} \beta G(|u|)\,dx <\infty \right\},
$$
where we have denoted $d\mu:=\frac{dx\,dy}{|x-y|^n}$.

With these preliminaries, our first result reads as follows.

\begin{theorem}\label{Three solution}
Let $G$ be a Young function satisfying the structural hypotheses \eqref{cond.intro},\eqref{G2} and \eqref{G3}, let $\beta\in L^\infty(\RR^n\setminus \Omega)$ and let $f,h \in \mathcal{A}$ such that
\begin{equation}  \tag{$F_1$} \label{pf1}
\max\left\{ \limsup_{|u|\to 0}\frac{\sup_{x\in\Omega} F(x,u)}{G(u)}, \limsup_{|u|\to+\infty}\frac{\sup_{x\in\Omega} F(x,u)}{G(u)} \right\}\leq 0,
\end{equation}

\begin{equation}  \tag{$F_2$} \label{pf2}
\sup_{u\in \X}\displaystyle\int_{\Omega}F(x,u)dx>0.
\end{equation}
Then, if we set
$$\delta=\inf\left\{\frac{\mathcal{J}(u)}{\mathcal{F}(u)}\colon u\in \X,\ \mathcal{F}(u)>0\right\},$$
where
$$
\mathcal{J}(u):=\iint_{\mathbb{R}^{2n} \setminus (\Omega^c)^2} G(|D_s u|)\,d\mu+ \int_{\Omega}G(|u|)dx +\int_{\mathbb{R}^{n}\backslash\Omega}\beta G(|u|)dx,
$$
for each compact interval $[a,b]\subset (\delta,\infty)$ there exists $\nu>0$ with the following property: for every $\lambda\in[a,b]$ and $h$, there exists $\gamma>0$ such that, for each $\mu\in[0,\gamma]$,    problem \eqref{eq} has at least three weak solutions whose norms in $\X$ are less than $\nu$.
\end{theorem}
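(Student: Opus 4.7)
The plan is to apply B.\ Ricceri's three critical points theorem to the energy functional associated with \eqref{eq},
$$\mathcal{I}_{\lambda,\mu}(u):=\mathcal{J}(u)-\lambda\mathcal{F}(u)-\mu\mathcal{H}(u),\qquad \mathcal{H}(u):=\int_\Omega H(x,u)\,dx,$$
defined on $\X$. First I would assemble the functional-analytic setup: that $(\X,\|\cdot\|_\X)$, with a natural Luxemburg-type norm, is a reflexive, separable Banach space, and that $\mathcal{J}$ is convex, coercive, continuous (hence sequentially weakly lower semicontinuous) and of class $C^1$ with a derivative admitting a continuous inverse on $\X^*$. The $\Delta_2$ property of both $G$ and its complementary function---a consequence of \eqref{cond.intro}---together with the convexity of $G$ gives all of this by standard fractional Orlicz--Sobolev arguments, and is also what makes $\mathcal{J}-\lambda\mathcal{F}-\mu\mathcal{H}$ satisfy the Palais--Smale condition.

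Next, using the growth condition \eqref{f1} (and its analogue for $h$) together with the compact embedding $\X\hookrightarrow\hookrightarrow L^M(\Omega)$---ensured by \eqref{G3}, which gives the subcritical embedding into $L^{G_*}$, and by the assumption $M\prec\prec G_*$---one obtains that $\mathcal{F}$ and $\mathcal{H}$ are $C^1$ with completely continuous derivatives. The most delicate step is to translate the pointwise asymptotic condition \eqref{pf1} into the functional-level statement
$$\limsup_{\|u\|_\X\to 0}\frac{\mathcal{F}(u)}{\mathcal{J}(u)}\leq 0 \quad\text{and}\quad \limsup_{\|u\|_\X\to +\infty}\frac{\mathcal{F}(u)}{\mathcal{J}(u)}\leq 0,$$
required for the threshold $\delta$ to coincide with Ricceri's extremum. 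Given $\varepsilon>0$, \eqref{pf1} yields $F(x,u)\leq \varepsilon G(|u|)+C_\varepsilon M(|u|)$ for all $u\in\RR$ and a.e.\ $x\in\Omega$; integrating and separating the regimes $\|u\|_\X\leq 1$ and $\|u\|_\X\geq 1$---controlled by the two-sided bound $\min\{\xi^{p^-},\xi^{p^+}\}G(t)\leq G(\xi t)\leq \max\{\xi^{p^-},\xi^{p^+}\}G(t)$ implied by \eqref{cond.intro}---one shows that $\int_\Omega M(|u|)\,dx$ is negligible with respect to $\mathcal{J}(u)$ in both limits. Hypothesis \eqref{pf2} then guarantees $\delta\in(0,+\infty)$, so that $(\delta,+\infty)$ is a nonempty interval.

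With these ingredients the three critical points theorem of Ricceri applies to $\Phi=\mathcal{J}$, $\Psi=-\mathcal{F}$ and perturbation $\mathcal{H}$, producing the constants $\nu>0$ and (depending on $h$) $\gamma>0$ of the statement, together with three critical points of $\mathcal{I}_{\lambda,\mu}$ of $\X$-norm strictly less than $\nu$ for every $\lambda\in[a,b]$ and $\mu\in[0,\gamma]$. I would close by invoking the integration-by-parts formula for $(-\Delta_g)^s$ developed earlier in the paper to identify critical points of $\mathcal{I}_{\lambda,\mu}$ with weak solutions of \eqref{eq}: the bulk term generates the equation $(-\Delta_g)^s u + g(u)u/|u| = \lambda f(x,u)+\mu h(x,u)$ in $\Omega$, while the contribution from $(\RR^n\setminus\Omega)\times\Omega$ produces the Robin-type boundary condition $\mathcal{N}_g u+\beta(x)g(u)u/|u|=0$ on $\RR^n\setminus\Omega$. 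The principal obstacle is the third step: the absence of homogeneity of $G$ precludes the clean scaling arguments available in the fractional $p$-Laplacian setting, and every estimate near $0$ and at infinity must instead be executed at the level of the Luxemburg norm, using the two-sided bound from \eqref{cond.intro} to navigate between modular and norm.
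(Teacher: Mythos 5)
Your overall strategy is the same as the paper's (Ricceri's abstract result applied to $\mathcal{J}-\lambda\mathcal{F}-\mu\mathcal{H}$ on $\X$, compactness of $\mathcal{F}'$, $\mathcal{H}'$ via the embedding of Lemma \ref{embedding}, identification of critical points with weak solutions through the integration by parts formula), but there is a genuine gap at exactly the step you call delicate. From \eqref{pf1} you pass to the global bound $F(x,t)\le \varepsilon G(|t|)+C_\varepsilon M(|t|)$ and then claim that $\int_\Omega M(|u|)\,dx$ is negligible with respect to $\mathcal{J}(u)$ both as $\|u\|_\X\to 0$ and as $\|u\|_\X\to\infty$. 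Neither claim follows from the hypotheses: the only assumption on $M$ is $M\prec\prec G_*$, so $M$ may grow like $G$ itself, in which case near zero $\int_\Omega M(|u|)\,dx$ is comparable to $\int_\Omega G(|u|)\,dx\le \mathcal{J}(u)$ and the ratio does not vanish (take $G(t)=t^p/p$, $M=G$, $u=t\phi$ with $t\to 0$: the ratio is a positive constant); and $M$ may instead grow almost like $G_*$, in which case at infinity $\int_\Omega M(|u|)\,dx$ can be of order $\|u\|_\X^{m^+}$ with $m^+>p^+\ge p^-$, while $\mathcal{J}(u)$ is only bounded below by $\|u\|_\X^{p^-}$, so again the ratio need not tend to $0$. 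The information you lose by globalizing the bound is that the term beyond $\varepsilon G(|t|)$ is only needed on the intermediate set $\{r_1\le |u|\le r_2\}$, where $|u|$ is bounded: there its contribution is at most a constant, hence harmless in the limit $\|u\|_\X\to\infty$; and near zero one must dominate that constant not by $M$ but by $d\,B(|t|)$ for a Young function $B$ \emph{chosen} with $G\prec\prec B\prec\prec G_*$ and $p^+<b^-$, so that Lemma \ref{ineq1} yields $\int_\Omega B(|u|)\,dx\big/\int_\Omega G(|u|)\,dx\le c\,\|u\|_{L^G(\Omega)}^{\,b^--p^+}\to 0$. This is precisely how the paper closes the estimate; as written, your argument does not.

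A secondary point: Theorem \ref{Ricceri} as used here does not ask for the Palais--Smale condition but for $\mathcal{J}\in\mathcal{W}_{\mathcal{X}}$, i.e.\ every sequence with $u_k\rightharpoonup u$ and $\liminf_k\mathcal{J}(u_k)\le\mathcal{J}(u)$ has a strongly convergent subsequence. This is not a consequence of convexity and $\Delta_2$ alone; in the paper it is exactly where hypothesis \eqref{G2} enters, through the inequality of Lemma \ref{lemita} used in Lemma \ref{Phit}(iii). Your outline never invokes \eqref{G2} and replaces this verification by a Palais--Smale remark, leaving one hypothesis of the abstract theorem unverified; the remaining structural ingredients you list (reflexivity, coercivity, weak lower semicontinuity, continuous inverse of $\mathcal{J}'$, compactness of $\mathcal{F}'$) do match the paper's Lemmas \ref{Phit} and \ref{Jt}.
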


We also prove the following result characterizing the geometry involved in the class of admissible nonlinearities.

\begin{theorem} \label{teo2}
Let $G$ be a Young function satisfying \eqref{cond.intro},\eqref{G2} and \eqref{G3}, let $\beta\in L^\infty(\RR^n\setminus \Omega)$ and let $f,h\in \mathcal{A}$ such that
\begin{itemize}
\item[(i)] there exists a Young function $B(t)=\int_0^t b(\tau)\,d\tau$ such that $\frac{tb(t)}{B(t)}\leq b^+<p^-$ and  $B\prec\prec G$, and a constant $c_1>0$ for which
$$
F(x,t)\leq c_1(1+B(t)) \quad \text{for all }(x,t)\in \Omega\times\RR;
$$

\item[(ii)] there exist a constant $c_2>0$, $\tau_1>0$ and a Young function $D(t)=\int_0^t d(\tau)\,d\tau$ such that $p^+<d^-\leq \frac{td(t)}{D(t)}$ and $G\prec\prec D$ for which
$$
F(x,t)\leq c_2 D(t) \quad \text{for all }(x,t)\in \Omega\times[-\tau_1,\tau_1];
$$
\item[(iii)] there exists $\tau_2 \in \RR\setminus \{0\}$ such that
$$
F(x,\tau_2)>0 \quad \text{ and } F(x,t)\geq 0 \quad \text{for all } (x,t)\in \Omega\times [0,\tau_2].
$$
\end{itemize}
Then there exists $\delta>0$ such that for every compact interval $[a,b]\subset (\delta,\infty)$ there exists a real number $\nu$ such that, for every $\lambda\in [a,b]$ and every continuous function $h$ there exists $\gamma>0$ such that, for each $\mu \in [0,\gamma]$, then  problem \eqref{eq} has at least three weak solutions whose norms in $\X$ are less than $\nu$.
\end{theorem}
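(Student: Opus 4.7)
The plan is to show that hypotheses (i)--(iii) imply the conditions \eqref{pf1} and \eqref{pf2} of Theorem \ref{Three solution}, after which the conclusion follows from that theorem with $\delta$ defined by the same Rayleigh-type quotient $\mathcal{J}(u)/\mathcal{F}(u)$.

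To verify \eqref{pf1}, note first that (i) gives $\sup_{x}F(x,t)\le c_1(1+B(t))$, and since $B\prec\prec G$ entails $B(t)/G(t)\to 0$ while $G(t)\to\infty$ as $|t|\to\infty$, the limit of $F/G$ at infinity is $\le 0$. For the limit at zero, (ii) supplies $\sup_x F(x,t)\le c_2 D(t)$ for $|t|\le\tau_1$. Integrating the bilateral bounds $p^-\le tg(t)/G(t)\le p^+$ and $d^-\le td(t)/D(t)\le d^+$ yields the standard comparisons $G(t)\ge G(1)|t|^{p^+}$ and $D(t)\le D(1)|t|^{d^-}$ for $|t|\le 1$, so
\[
\frac{F(x,t)}{G(t)}\ \le\ \frac{c_2 D(1)}{G(1)}\,|t|^{d^- - p^+},
\]
and the right-hand side tends to $0$ as $t\to 0$ since $d^- > p^+$ by assumption.

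For \eqref{pf2}, assume without loss of generality that $\tau_2>0$ (the other sign is symmetric). Pick a compact set $K$ strictly inside $\Omega$ of positive Lebesgue measure, and a smooth compactly supported function $\phi$ on $\Omega$ with $\phi\equiv\tau_2$ on $K$ and $0\le\phi\le\tau_2$ elsewhere, extended by zero to $\RR^n$. Such $\phi$ belongs to $\X$: the integral of $G(|\phi|)$ over $\Omega$ is bounded by $|\Omega|\,G(\tau_2)$, the integral over $\RR^n\setminus\Omega$ vanishes, and the fractional $G$-seminorm is finite by splitting the domain into the piece $\Omega\times\Omega$ (handled by the Lipschitz estimate on $\phi$ together with the growth bound on $G$) and the piece $\Omega\times(\RR^n\setminus\Omega)$ (handled by the fact that $\mathrm{dist}(\mathrm{supp}\,\phi,\,\partial\Omega)>0$ and by the decay of $G(c/|x-y|^s)/|x-y|^n$ for large $|x-y|$). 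Hypothesis (iii) then gives
\[
\int_\Omega F(x,\phi(x))\,dx\ \ge\ \int_K F(x,\tau_2)\,dx\ >\ 0,
\]
establishing \eqref{pf2}.

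With \eqref{pf1} and \eqref{pf2} in force, Theorem \ref{Three solution} yields the desired three weak solutions with norms in $\X$ bounded by $\nu$. The only genuine obstacle is the limit at zero in the verification of \eqref{pf1}: since $G\prec\prec D$ is a condition at infinity, it provides no direct information near zero; one must exploit the exponent hierarchy $d^- > p^+$ from (ii) to convert the global behavior into the required local decay $D(t)/G(t)\to 0$ as $t\to 0$.
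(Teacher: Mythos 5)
Your proof is correct, but it takes a genuinely different route from the paper's. You reduce Theorem \ref{teo2} to Theorem \ref{Three solution} by verifying the pointwise conditions \eqref{pf1} and \eqref{pf2}: at infinity you use only $B\prec\prec G$ together with \eqref{L1} (which indeed forces $B(t)/G(t)\to 0$ and $G(t)\to\infty$), near zero you integrate the exponent bounds to obtain $D(t)\le D(1)|t|^{d^-}$ and $G(t)\ge G(1)|t|^{p^+}$ for $|t|\le 1$, and for \eqref{pf2} you insert a smooth compactly supported bump equal to $\tau_2$ on a compact set $K\subset\Omega$ of positive measure (which lies in $\X$, as you argue). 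The paper does not pass through Theorem \ref{Three solution} at all: it verifies the hypotheses of Ricceri's result (Theorem \ref{Ricceri}) directly at the functional level, estimating $\limsup \mathcal{F}(u)/\mathcal{J}(u)$ as $\|u\|_{\X}\to\infty$ and as $\|u\|_{\X}\to 0$ via modular--norm comparisons (Lemma \ref{ineq1} combined with \cite[Theorem 3.17.1]{FJK}), where both exponent conditions $b^+<p^-$ and $d^->p^+$ are used, and it produces a function with $\mathcal{F}>0$ by truncating an arbitrary nonnegative $u\in\X$ with $k(t)=\min\{t,\tau_2\}$ rather than by constructing a bump. Your reduction is more economical and modular, since it reuses the machinery already encapsulated in Theorem \ref{Three solution}, and it even shows that at infinity the condition $B\prec\prec G$ alone suffices for \eqref{pf1}; the paper's argument is self-contained at the Ricceri level and works with norm asymptotics in $\X$ instead of pointwise asymptotics of $F/G$. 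Both yield the stated conclusion: if the $\delta$ furnished by Theorem \ref{Three solution} happens to be $0$, any positive $\delta$ works in Theorem \ref{teo2}, since compact intervals in $(\delta,\infty)$ are then a fortiori admissible.
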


We remark that the class of admissible  nonlinearities in Theorem \ref{Three solution} includes perturbations of powers and concave-convex type combinations, among other. See Section \ref{sec.ejemplos} for further examples.

\medskip

Very close to \eqref{eq}, as a second aim, we will study eigenvalues and minimizers of several nonlocal problems with non-standard growth involving different boundary conditions. For the case of powers, that is, for fractional $p-$Laplacian type operators, the Dirichlet case was studied for instance in \cite{LL,SV}, for the Neumann case see for instance \cite{DPS, ML}, the Robin case was dealt in \cite{K}. For general Orlicz functions and Dirichlet boundary conditions we refer to \cite{Salort2}.

To be more precise, we  consider the following Dirichlet eigenvalue problem
\begin{align} \label{eq.d}
\begin{cases}
(-\Delta_g)^s u + g(|u|)\frac{u}{|u|}=\lambda g(|u|)\frac{u}{|u|} &\text{ in } \Omega\\
u=0 &\text{ in } \RR^n \setminus \Omega,
\end{cases}
\end{align}
the following Neumann problem in terms of the nonlocal normal derivative $\mathcal{N}_g$
\begin{align} \label{eq.n}
\begin{cases}
(-\Delta_g)^s u + g(|u|)\frac{u}{|u|}=\lambda g(|u|) \frac{u}{|u|} &\text{ in } \Omega\\
\mathcal{N}_g u=0 &\text{ in } \RR^n \setminus \Omega,
\end{cases}
\end{align}
the following problem, which, from a probabilistic point of view can be seen also as a Neumann eigenvalue problem (see \cite{DPS})
\begin{align} \label{eq.n2}
\begin{cases}
(-\Delta_{g})_\Omega^s u + g(|u|)\frac{u}{|u|}=\lambda g(|u|)\frac{u}{|u|} &\text{ in } \Omega\\
u\in W^{s,G}_{reg}.
\end{cases}
\end{align}
and finally, the following Robin eigenvalue problem
\begin{align} \label{eq.r}
\begin{cases}
(-\Delta_{g})^s u + g(|u|)\frac{u}{|u|}=\lambda g(|u|)\frac{u}{|u|} &\text{ in } \Omega\\
\mathcal{N}_g u + \beta g(|u|)\frac{u}{|u|}=0 &\text{ in } \RR^n \setminus \Omega.
\end{cases}
\end{align}
Here, for $0<s<1$ we have denoted the \emph{regional fractional} $g-$Laplacian   as
\begin{align*} 
(-\Delta_{g})_\Omega^s u&:=2 \,\text{p.v.} \int_{\Omega\times\Omega} g( |D_s u|) \frac{D_s u}{|D_s u|} \frac{dy}{|x-y|^{n+s}},
\end{align*}
which is naturally defined in the space
$$
W^{s,G}_{reg}(\Omega):=\left\{u\colon  \int_\Omega G(|u|)\,dx +  \iint_{\Omega\times\Omega} G\left( D_s u \right) \,d\mu<\infty\right\}.
$$

A substantial difference which contrasts with the case of powers is that, in general, eigenvalues of \eqref{eq.d},  \eqref{eq.n}, \eqref{eq.n2} and \eqref{eq.r} are not variational, i.e., they cannot be obtained by minimizing some  Rayleigh quotient on a suitable space. For this reason, it is very interesting to study also the natural variational minimization problem related to Dirichlet, Neumann, regional Neumann and Robin boundary conditions. In order to not extend considerably the length of this introduction,  we anticipate that the corresponding minimizers exist, are well defined (see Proposition \eqref{exist.minim}) and   are denoted as $\Lambda_D$, $\Lambda_N$, $\Lambda_{\tilde N}$ and $\Lambda_R$, respectively, but we will not define them here (see equations \eqref{m.d}, \eqref{m.n}, \eqref{m.n2} and \eqref{m.r} for the precise definition).

In spite of the fact that eigenvalues and minimizers are different quantities in general, in light of Proposition \ref{relacion3} they are comparable, with equality in the case of powers (i.e., when $G(t)=t^p/p, p>1)$. Regarding the relation among the different minimizers, in Proposition \ref{relacion} we prove that they are ordered as
$$
\Lambda_{\tilde N} \leq \Lambda_N \leq \Lambda_R \leq \Lambda_D.
$$
In view of the aforementioned Proposition \ref{relacion3}, eigenvalues are consequently ordered as
$$
\lambda_{\tilde N}\leq c^2\lambda_N  \leq c^4 \lambda_R\leq c^6 \lambda_D,
$$
where $c=p^+/p^-$.

In Theorem \ref{teo1} we prove that a function reaching the minimization problem for $\Lambda\in \{\Lambda_{\tilde N}, \Lambda_N, \Lambda_R, \Lambda_D\}$ is an eigenfunction for $\lambda \in   \{\lambda_{\tilde N}, \lambda_N, \lambda_R, \lambda_D\}$, respectively. A considerable difference with the case of powers is that, due to the non-homogeneous nature of the problems,  both eigenvalues and minimizers strongly depend on the energy level: for each $\mu>0$, if the eigenfunction/minimizing function is normalized such that $\int_\Omega G(|u|)=\mu$, then $\Lambda$ and $\lambda$ depend on $\mu$. Nevertheless, in Proposition \ref{relacion2} we prove that $\Lambda$ and $\lambda$ are uniformly bounded by below independently of $\mu$.

Before concluding this introduction, we mention some interesting issues we not deal and let as open questions: to establish positivity of eigenfunctions, to obtain its boundedness,  and to study its interior/up to the boundary regularity.

This paper is organized as follows. In Section \ref{sec.prel} we introduce some  preliminary results and definitions, as well as a proof of an integration by parts formula related to the operator $(-\Delta)^s_g$. Section \ref{sec.3} deals with the proof of our existence results. Some  examples of nonlinearities which illustrate Theorems \ref{Three solution} and \ref{teo2} are given in Section \ref{sec.ejemplos}. Finally, Section \ref{sec.5} is devoted to study the eigenvalue problems \eqref{eq.d},\eqref{eq.n},\eqref{eq.n2} and \eqref{eq.r}.

\section{Preliminaries} \label{sec.prel}
In this section we introduce the classes of Young function and fractional Orlicz-Sobolev functions, the
suitable class where the fractional $g$-Laplacian is well defined.
\subsection{Young functions}
An application $G\colon\RR_+\to \RR_+$ is said to be a  \emph{Young function} if it admits the integral formulation $G(t)=\int_0^t g(\tau)\,d\tau$, where the right continuous function $g$ defined on $[0,\infty)$ has the following properties:
\begin{align*}
&g(0)=0, \quad g(t)>0 \text{ for } t>0 \label{g0} \tag{$g_1$}, \\
&g \text{ is nondecreasing on } (0,\infty) \label{g2} \tag{$g_2$}, \\
&\lim_{t\to\infty}g(t)=\infty  \label{g3} \tag{$g_3$} .
\end{align*}
From these properties it is easy to see that a Young function $G$ is continuous, nonnegative, strictly increasing and convex on $[0,\infty)$.

We will assume the following growth behavior on Young functions
\begin{equation} \label{cond} \tag{L}
1<p^-\leq \frac{tg(t)}{G(t)} \leq p^+<\infty \quad \forall t>0
\end{equation}
where $p^\pm$ are fixed numbers. Roughly speaking, condition \eqref{cond} indicates that $G$ remains between two power functions.

The following  properties on Young functions are well-known. See for instance \cite{FJK} for a proof.

\begin{lemma} \label{lema.prop}
Let $G$ be a Young function satisfying \eqref{cond} and $a,b\geq 0$. Then
\begin{align*}
  &\min\{ a^{p^-}, a^{p^+}\} G(b) \leq G(ab)\leq   \max\{a^{p^-},a^{p^+}\} G(b),\tag{$L_1$}\label{L1}\\
  &G(a+b)\leq \mathbf{C} (G(a)+G(b)) \quad \text{with } \mathbf{C}:=  2^{p^+},\tag{$L_2$}\label{L2}\\
	&G \text{ is Lipschitz continuous}. \tag{$L_3$}\label{L_3}
 \end{align*}
\end{lemma}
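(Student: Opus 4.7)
The plan is to derive $(L_1)$ from a pair of monotonicity facts, then obtain $(L_2)$ as a direct consequence, and finally verify $(L_3)$ from the fact that $g$ is locally bounded.

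For $(L_1)$, the first step is to reformulate the hypothesis \eqref{cond}: since $G$ is absolutely continuous with $G'=g$ and $G(t)>0$ for $t>0$, the inequality $p^-\le tg(t)/G(t)\le p^+$ is equivalent to
\begin{equation*}
\frac{p^-}{t}\le \frac{d}{dt}\log G(t)\le \frac{p^+}{t}, \qquad t>0.
\end{equation*}
Equivalently, the maps $t\mapsto G(t)/t^{p^-}$ is nondecreasing and $t\mapsto G(t)/t^{p^+}$ is nonincreasing, which follows immediately by computing derivatives and invoking \eqref{cond}. Applied with $t=ab$ compared to $t=b$, this yields, in the case $a\ge 1$,
\begin{equation*}
a^{p^-}G(b)\le G(ab)\le a^{p^+}G(b),
\end{equation*}
while the case $a\le 1$ reverses both powers, giving $a^{p^+}G(b)\le G(ab)\le a^{p^-}G(b)$. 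Taking $\min$ and $\max$ of $a^{p^-},a^{p^+}$ in each case combines both into the claimed two-sided bound. (Equivalently, one can integrate the logarithmic inequality from $b$ to $ab$.)

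For $(L_2)$, assume without loss of generality that $a\ge b\ge 0$, so $a+b\le 2a$. Monotonicity of $G$ (from \eqref{g2}) and \eqref{L1} applied with scale factor $2$ give $G(a+b)\le G(2a)\le 2^{p^+}G(a)\le 2^{p^+}(G(a)+G(b))$. For $(L_3)$, observe that by \eqref{g2} the derivative $g$ is nondecreasing and finite at every point, hence bounded on each compact interval $[0,T]$ by $g(T)$; by the fundamental theorem of calculus, $|G(t_1)-G(t_2)|\le g(T)|t_1-t_2|$ for $t_1,t_2\in[0,T]$, so $G$ is Lipschitz on every bounded interval.

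No real obstacle arises: the only nontrivial observation is recasting \eqref{cond} as the monotonicity of $G(t)/t^{p^\pm}$, after which all three items follow by direct manipulation. I would simply cite \cite{FJK} for $(L_1)$ if desired and give the short two-line deductions for $(L_2)$ and $(L_3)$.
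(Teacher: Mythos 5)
Your proof is correct. Note that the paper itself does not prove Lemma \ref{lema.prop}: it only remarks that these properties are well known and cites \cite{FJK}, so the relevant comparison is with the standard literature argument, which is essentially what you have written. Your key step --- recasting \eqref{cond} as the statement that $t\mapsto G(t)/t^{p^-}$ is nondecreasing and $t\mapsto G(t)/t^{p^+}$ is nonincreasing, and then comparing the values at $b$ and $ab$ in the two cases $a\ge 1$ and $a\le 1$ --- is the classical route to \eqref{L1}, and your deductions of \eqref{L2} (via $a+b\le 2a$, monotonicity of $G$, and \eqref{L1} with scale factor $2$, which indeed yields the constant $2^{p^+}$) and of \eqref{L_3} are sound. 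One point worth making explicit in your write-up: as stated, ``$G$ is Lipschitz continuous'' cannot mean globally Lipschitz on $[0,\infty)$, since \eqref{L1} forces $G(t)\ge G(1)\,t^{p^-}$ for $t\ge 1$ with $p^->1$ (and $g(t)\to\infty$ by \eqref{g3}); the intended meaning is Lipschitz continuity on bounded intervals, which is exactly what your estimate $|G(t_1)-G(t_2)|\le g(T)|t_1-t_2|$ for $t_1,t_2\in[0,T]$ establishes, so your reading is the only consistent one. In short, your argument is a complete, self-contained substitute for the paper's citation, and it gains a small amount of transparency (the monotonicity reformulation makes the exponents in \eqref{L1} and the constant in \eqref{L2} visibly sharp) at no extra cost.
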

Condition \eqref{L2} is known as the \emph{$\Delta_2$ condition} or \emph{doubling condition} and, as it is showed in \cite[Theorem 3.4.4]{FJK}, it is equivalent to the right hand side inequality in \eqref{cond}.

The \emph{complementary Young function} $\tilde G$ of a Young function $G$ is defined as
$$
\tilde G(t):=\sup\{tw -G(w): w>0\}.
$$
From this definition the following Young-type inequality holds
\begin{equation} \label{Young}
ab\leq G(a)+\tilde G(b)\qquad \text{for all }a,b\geq 0,
\end{equation}
and the following H\"older's type inequality
$$
\int_\Omega |uv|\,dx \leq \|u\|_G \|v\|_{\tilde G}
$$
for all $u\in L^G(\Omega)$ and $v\in L^{\tilde G}(\Omega)$. Moreover, it is not hard to see that $\tilde G$ can be written in terms of the inverse of $\phi$ as
\begin{equation} \label{xxxx}
\tilde G(t)=\int_0^t g^{-1}(\tau)\,d\tau,
\end{equation}
see \cite[Theorem 2.6.8]{RR}.

Since $\phi^{-1}$ is increasing, from \eqref{xxxx} and \eqref{cond.intro} it is immediate the following relation.
\begin{lemma} \label{lemita.1}
Let $G$ be an Young function satisfying \eqref{cond.intro} such that $g=G'$ and denote by  $\tilde G$ its complementary function. Then
$$
\tilde G(g(t)) \leq (p^+ +1) G(t)
$$
holds for any $t\geq 0$.
\end{lemma}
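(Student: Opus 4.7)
The plan is to start from the integral representation given in \eqref{xxxx}, namely
\[
\tilde G(s)=\int_0^s g^{-1}(\tau)\,d\tau,
\]
and evaluate at $s=g(t)$. Since $g$ is nondecreasing on $[0,\infty)$ by \eqref{g2}, its generalized inverse $g^{-1}$ is also nondecreasing, so for every $\tau\in[0,g(t)]$ we have $g^{-1}(\tau)\le g^{-1}(g(t))=t$. Bounding the integrand by this constant yields the pointwise estimate
\[
\tilde G(g(t))=\int_0^{g(t)} g^{-1}(\tau)\,d\tau \le t\cdot g(t).
\]

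The second and final step is to invoke the growth hypothesis \eqref{cond.intro}, which in its right-hand form reads $tg(t)\le p^+ G(t)$ for all $t\ge 0$. Combining this with the bound above gives
\[
\tilde G(g(t))\le tg(t)\le p^+\, G(t)\le (p^++1)\,G(t),
\]
which is the desired inequality (in fact the sharper bound $p^+G(t)$ comes out for free, so the $+1$ simply provides a convenient buffer).

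As a cross-check, one can equivalently obtain the identity $\tilde G(g(t))+G(t)=tg(t)$, which is the equality case of Young's inequality \eqref{Young} for the dual pair $a=t$, $b=g(t)$; rearranging and applying \eqref{cond.intro} gives $\tilde G(g(t))=tg(t)-G(t)\le(p^+-1)G(t)$, an even tighter estimate. I do not anticipate any genuine obstacle: the proof reduces to one monotonicity observation plus one application of the structural assumption \eqref{cond.intro}.
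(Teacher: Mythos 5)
Your proof is correct and follows essentially the same route the paper indicates: the integral formula \eqref{xxxx} for $\tilde G$, monotonicity of $g^{-1}$, and the right-hand inequality of \eqref{cond.intro}, which the paper itself dispatches in one line. Your cross-check via the equality case of Young's inequality, $\tilde G(g(t))=tg(t)-G(t)\le (p^+-1)G(t)$, even yields a sharper constant than the stated $(p^++1)$.
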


The following convexity property will be useful.
\begin{lemma} \cite[Lemma 2.1]{Lamperti}\label{lemita}
Let $G$ be a Young function satisfying \eqref{cond.intro} and \eqref{G2}. Then for every $a,b\in \RR$,
$$
\frac{G(|a|) + G(|b|)}{2} \geq
    G\left(\left|\frac{a+b}{2} \right| \right) + G\left(\left|\frac{a-b}{2} \right| \right).
$$
\end{lemma}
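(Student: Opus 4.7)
The proof hinges on the auxiliary function $\Phi(t):=G(\sqrt{t})$ defined on $[0,\infty)$, which by hypothesis \eqref{G2} is convex. Note that $\Phi(0)=G(0)=0$ and that the parallelogram identity yields the crucial algebraic relation
\begin{equation*}
\left(\tfrac{a+b}{2}\right)^{2}+\left(\tfrac{a-b}{2}\right)^{2}=\tfrac{a^{2}+b^{2}}{2},
\end{equation*}
which is what makes $\Phi$ (rather than $G$ itself) the right object to work with.

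The first step is to apply the convexity of $\Phi$ to the midpoint of $a^{2}$ and $b^{2}$:
\begin{equation*}
\tfrac{G(|a|)+G(|b|)}{2}=\tfrac{\Phi(a^{2})+\Phi(b^{2})}{2}\ \geq\ \Phi\!\left(\tfrac{a^{2}+b^{2}}{2}\right).
\end{equation*}
By the parallelogram identity above, the right-hand side equals $\Phi(u+v)$ with $u:=\left(\tfrac{a+b}{2}\right)^{2}\geq 0$ and $v:=\left(\tfrac{a-b}{2}\right)^{2}\geq 0$.

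The second step is to establish the superadditivity inequality
\begin{equation*}
\Phi(u+v)\geq \Phi(u)+\Phi(v)\quad\text{for all } u,v\geq 0,
\end{equation*}
which is a standard consequence of convexity together with $\Phi(0)=0$: writing $u=\tfrac{u}{u+v}(u+v)+\tfrac{v}{u+v}\cdot 0$ and applying Jensen gives $\Phi(u)\leq \tfrac{u}{u+v}\Phi(u+v)$, and similarly for $v$; summing yields the claim (the case $u+v=0$ being trivial). Note that the hypothesis \eqref{cond.intro} is used only implicitly in guaranteeing that $G$ (hence $\Phi$) is well-defined and satisfies $G(0)=0$; it is convexity of $\Phi$ in \eqref{G2} that does the real work.

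Combining both steps,
\begin{equation*}
\tfrac{G(|a|)+G(|b|)}{2}\ \geq\ \Phi(u+v)\ \geq\ \Phi(u)+\Phi(v)\ =\ G\!\left(\left|\tfrac{a+b}{2}\right|\right)+G\!\left(\left|\tfrac{a-b}{2}\right|\right),
\end{equation*}
since $\Phi(u)=G(\sqrt{u})=G(|(a+b)/2|)$ and analogously for $v$. I do not anticipate any real obstacle: the only tiny care-point is the superadditivity argument, which needs the normalization $\Phi(0)=0$ (automatically true for a Young function) and must handle the degenerate case $u+v=0$ separately.
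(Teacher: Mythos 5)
Your proof is correct. Note, however, that the paper does not actually prove this lemma: it simply cites Lamperti's Lemma~2.1, of which the stated inequality is the special case obtained by substituting $z=(a+b)/2$, $w=(a-b)/2$ into Lamperti's inequality $\varphi(|z+w|)+\varphi(|z-w|)\geq 2\varphi(|z|)+2\varphi(|w|)$ and dividing by $2$. Your argument is a clean self-contained replacement for that citation, and it rests on the same two pillars as Lamperti's original proof, namely the parallelogram identity $\left(\frac{a+b}{2}\right)^2+\left(\frac{a-b}{2}\right)^2=\frac{a^2+b^2}{2}$ and the convexity of $\Phi(t)=G(\sqrt{t})$ from \eqref{G2}; where Lamperti compares increments of the convex function over translated intervals, you instead invoke midpoint convexity followed by superadditivity of a convex function vanishing at the origin ($\Phi(u)\leq\frac{u}{u+v}\Phi(u+v)$, summed over $u$ and $v$, with the degenerate case $u+v=0$ handled separately). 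Both routes are correct; yours has the advantage of being elementary, short, and independent of the external reference, and your observation that \eqref{cond.intro} plays no real role here (only $\Phi(0)=0$ and convexity are needed) is accurate.
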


\subsection{Fractional Orlicz-Sobolev spaces}\label{sec.sobolev}
Given a Young function $G$, a parameter $s\in(0,1)$ and an open and bounded set $\Omega\subseteq \RR^n$ we consider the spaces
\begin{align*}
&L^G(\Omega) :=\left\{ u\colon \Omega \to \RR \text{ measurable }\colon  \Phi_{G}(u) < \infty \right\},\\
&W^{s,G}(\Omega):=\left\{ u\in L^G(\Omega) \colon \Phi_{s,G,\RR^n}(u)<\infty \right\},\\
&W^{s,G}_{reg}(\Omega):=\{u\in L^G(\Omega)  \colon \Phi_{s,G,\Omega}(u)<\infty\}
\end{align*}
where the modulars $\Phi_G$ and $\Phi_{s,G}$ are defined as
\begin{align*}
&\Phi_{G,\Omega}(u):=\int_{\Omega} G(|u(x)|)\,dx\\
&\Phi_{s,G,\RR^n}(u):=
  \iint_{\RR^n\times\RR^n} G( |D_su(x,y)|)  \,d\mu,\\
&\Phi_{s,G,\Omega}(u):=
  \iint_{\Omega\times\Omega} G( |D_su(x,y)|)  \,d\mu,
\end{align*}
and  the \emph{$s-$H\"older quotient} is defined as
$$
D_s u(x,y):=\frac{u(x)-u(y)}{|x-y|^s},
$$
with $d\mu(x,y):=\frac{ dx\,dy}{|x-y|^n}$.
These spaces are endowed with the so-called \emph{Luxemburg norms}
\begin{align*}
&\|u\|_{L^G(\Omega)} := \inf\left\{\lambda>0\colon \Phi_{G,\Omega}\left(\frac{u}{\lambda}\right)\le 1\right\},\\
&\|u\|_{W^{s,G}(\Omega)} := \|u\|_{L^G(\Omega)} + [u]_{W^{s,G}(\RR^n)},\\
&\|u\|_{W^{s,G}_{reg}(\Omega)} := \|u\|_{L^G(\Omega)} + [u]_{W^{s,G}_{reg}(\Omega)},
\end{align*}
where the  {\em $(s,G)$-Gagliardo semi-norms} are defined as
\begin{align*}
&[u]_{W^{s,G}(\RR^n)} :=\inf\left\{\lambda>0\colon \Phi_{s,G,\RR^n}\left(\frac{u}{\lambda}\right)\le 1\right\},\\
&[u]_{W^{s,G}_{reg}(\Omega)} :=\inf\left\{\lambda>0\colon \Phi_{s,G,\Omega}\left(\frac{u}{\lambda}\right)\le 1\right\}.
\end{align*}
The space $W^{s,G}(\Omega)$ is a reflexive Banach space. Moreover $C_c^\infty$ is dense in $W^{s,G}(\RR^n)$. See \cite[Proposition 2.11]{FBS} and \cite[Proposition 2.9]{DNFBS} for details.

We also consider the following space
$$
W^{s,G}_0(\Omega):=\left\{ u\in W^{s,G}(\RR^n) :\ u=0\ a.e.\ \text{in}\ \RR^n\setminus\Omega\right\}.
$$
Observe that $W^{s,G}_0(\Omega)\subset W^{s,G}(\RR^n)\subset L^{G}(\RR^n)$.

In order to state some embedding results for fractional Orlicz-Sobolev spaces we introduce the following notation.

Given two Young functions $A$ and $B$, we say that \emph{$B$ is essentially stronger than $A$} or equivalently that \emph{$A$ decreases essentially more rapidly than $B$}, and denoted by $A\prec \prec B$, if for each $a>0$ there exists $x_a\geq 0$ such that $A(x)\leq B(ax)$ for $x\geq x_a$.

When the Young function $G$ fulfills condition \eqref{G3}, the critical function for the fractional Orlicz-Sobolev embedding is given by
$$
G_{*}^{-1}(t)=\int_{0}^{t}\frac{G^{-1}(\tau)}{\tau^{\frac{n+s}{n}}}d\tau.
$$

The following result can be found in \cite{sabri2}. See also \cite{Cianchi} for further generalizations.
\begin{theorem}\label{ceb}
	Let $G$ be a Young function satisfying \eqref{G3} and $s\in(0,1)$. Let $\Omega\subset \RR^n$ be a $C^{0,1}$ bounded open subset. Then
\begin{itemize}
  \item[(i)] \label{7}
  the embedding $W^{s,G}_{reg}(\Omega)\hookrightarrow L^{G_{*}}(\Omega)$ is continuous;

  \item[(ii)] \label{Bem}
  for any Young function $B$ such that $B \prec \prec G_{*}$, the embedding  $W^{s,G}_{reg}(\Omega)\hookrightarrow L^{B}(\Omega)$ is compact.
\end{itemize}
\end{theorem}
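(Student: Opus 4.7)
My plan is to deduce both statements from the fractional Orlicz-Sobolev embedding on the whole space via a standard extension operator. Since $\partial\Omega\in C^{0,1}$, one constructs $E\colon W^{s,G}_{reg}(\Omega)\to W^{s,G}(\RR^n)$ by localizing near $\partial\Omega$ through a Lipschitz partition of unity, flattening the boundary in each chart, and reflecting across the resulting hyperplane; the $\Delta_2$ condition encoded in the right inequality of \eqref{cond.intro} keeps the $(s,G)$-Gagliardo modular of the reflection controlled. This reduces part (i) to the whole-space embedding $W^{s,G}(\RR^n)\hookrightarrow L^{G_*}(\RR^n)$.

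For the whole-space embedding I would adapt Hedberg's truncation method to the Orlicz scale. Given $u\in C_c^\infty(\RR^n)$, write a pointwise integral representation of $u$ in terms of $D_s u$, split the integral over a ball $B_r(x)$ and its complement, and optimize in $r$ against a fractional maximal function of $D_s u$. Converting the resulting weak-type bound into a modular $L^{G_*}$ estimate uses precisely the definition $G_*^{-1}(t)=\int_0^t G^{-1}(\tau)\tau^{-(n+s)/n}\,d\tau$: the finite integral in \eqref{G3} is what makes $G_*^{-1}$ well defined at $0$, while \eqref{cond.intro} ensures $G_*$ is again a Young function of class $\Delta_2$. An alternative route sidestepping maximal functions is Cianchi's symmetrization approach: establish a fractional Pólya-Szegő principle reducing matters to radial decreasing rearrangements, after which the embedding follows from a one-dimensional Hardy-type inequality whose sharp Orlicz target is exactly $G_*$.

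For part (ii), the scheme is \emph{compactness into $L^G$} combined with (i) and a Vitali argument. I would first establish that $W^{s,G}_{reg}(\Omega)\hookrightarrow L^G(\Omega)$ is compact via a Fréchet-Kolmogorov criterion in Orlicz spaces: a uniform modular bound $\Phi_{s,G,\Omega}(u_k)\le C$ yields translation equicontinuity $\|\tau_h u_k-u_k\|_{L^G(\Omega')}\to 0$ uniformly in $k$, and a cut-off near $\partial\Omega$ handles the boundary contribution. From any bounded sequence $(u_k)\subset W^{s,G}_{reg}(\Omega)$ this produces a subsequence converging in $L^G(\Omega)$ and, after further extraction, almost everywhere. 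By (i) it is also bounded in $L^{G_*}(\Omega)$, so the hypothesis $B\prec\prec G_*$ gives, for each $\varepsilon>0$, some $\lambda>0$ with $B(t)\le\varepsilon G_*(t)$ for $t\ge\lambda$; hence $\{B(|u_k|)\}$ is uniformly integrable and Vitali's theorem yields modular, hence norm, convergence in $L^B(\Omega)$.

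The main obstacle is part (i): passing from a $G$-modular bound on $D_s u$ to a $G_*$-modular bound on $u$ when $G_*$ is only implicitly defined through its inverse. One must verify that $G_*$ inherits $\Delta_2$ from the hypotheses on $G$ and the integrability condition \eqref{G3}, and that the constants produced by truncation or symmetrization transfer properly from modular estimates to Luxemburg-norm estimates. Once the whole-space continuous embedding is established, the remaining reductions to $\Omega$ and the passage from (i) to (ii) are essentially routine.
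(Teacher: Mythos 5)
The paper does not prove Theorem \ref{ceb} at all: it is imported verbatim from \cite{sabri2} (with \cite{Cianchi} cited for sharper versions), so there is no in-paper argument to measure your proposal against. Taken on its own terms, your outline follows the same general architecture as those references: reduce to the whole space by an extension across the Lipschitz boundary, prove the sharp embedding $W^{s,G}(\RR^n)\hookrightarrow L^{G_*}$ by potential/truncation estimates or by rearrangement, and obtain (ii) from compactness into $L^G(\Omega)$ (Fréchet--Kolmogorov), boundedness in $L^{G_*}(\Omega)$ from (i), and a Vitali argument using $B\prec\prec G_*$. So the strategy is sound and standard.

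That said, as a proof it has real gaps rather than routine omissions. First, the entire analytic content of (i) --- the whole-space inequality with the sharp target $G_*$ defined through $G_*^{-1}(t)=\int_0^t G^{-1}(\tau)\tau^{-(n+s)/n}\,d\tau$, and the construction of an extension operator controlling the $(s,G)$-Gagliardo modular on a $C^{0,1}$ domain for a general Young function --- is only announced (``adapt Hedberg's truncation'', ``Cianchi's symmetrization''), not carried out; this is precisely where the work lies. Second, your argument repeatedly leans on \eqref{cond.intro} and on $G_*$ being doubling, whereas the statement assumes only \eqref{G3}; in the cited sources the embedding does not require these extra hypotheses, so either you must add them explicitly (harmless inside this paper, where \eqref{cond.intro} is standing) or avoid them. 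Third, two small technical slips in (ii): with the paper's definition of $\prec\prec$, the correct consequence is $B(t)\leq G_*(at)$ for $t\geq t_a$ (for every $a>0$), not $B(t)\leq\varepsilon G_*(t)$, and ``modular, hence norm, convergence in $L^B$'' needs either a $\Delta_2$ assumption on $B$ or running the Vitali argument on $B(|u_k-u|/\lambda)$ for every $\lambda>0$; both are fixable but should be stated.
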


From \eqref{lema.prop} it follows the following relation between modulars and norms. See \cite[Lemma 3.1]{Sabri1} or \cite[Lemma 2.1]{Fukagai}.
\begin{lemma}\label{ineq1}
   Let $G$ be a Young function satisfying \eqref{cond.intro} and let $\xi^-(t)=\min\{t^{p^-},t^{p^+}\}$, $\xi^+(t)=\max\{t^{p^-},t^{p^+}\}$, for all $t\geq0$.
    Then, given $\Omega\subset \RR^n$, 
    \begin{itemize}
      \item[(i)] $\xi^-(\|u\|_G)\leq\Phi_G(u)\leq\xi^+(\|u\|_G)\ \text{for}\ u\in L^{G}(\Omega)$,
      \item[(ii)] $\xi^-([u]_{s,G})\leq \Phi_{s,G}(u) \leq\xi^+([u]_{s,G})\ \text{for}\ u\in W^{s,G}(\Omega)$.
    \end{itemize}
 \end{lemma}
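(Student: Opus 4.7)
The plan is to derive both inequalities from property \eqref{L1} of Lemma \ref{lema.prop} by scaling with respect to the Luxemburg norm. The argument for the modular $\Phi_G$ and the modular $\Phi_{s,G}$ is essentially identical, since \eqref{L1} is applied pointwise and then integrated (against $dx$ in the first case, and against the measure $d\mu$ on $\RR^n\times\RR^n$ in the second).

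I would proceed as follows. Fix $u\in L^G(\Omega)$ with $u\not\equiv 0$, and set $\lambda:=\|u\|_{L^G(\Omega)}>0$. The first step is to establish the calibration
\[
\Phi_{G,\Omega}\!\left(\frac{u}{\lambda}\right)=1.
\]
The inequality $\le 1$ follows from the definition of the Luxemburg norm together with the fact that $\Phi_{G,\Omega}(u/\mu)$ is monotone non-increasing and continuous in $\mu>0$ (monotone convergence plus the $\Delta_2$-condition \eqref{L2}, which guarantees finiteness and continuity on the whole range). If $\Phi_{G,\Omega}(u/\lambda)<1$, then by continuity there would exist some $\mu<\lambda$ with $\Phi_{G,\Omega}(u/\mu)\le 1$, contradicting the definition of the infimum. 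The second step is to apply \eqref{L1} with $a=\lambda$ and $b=|u(x)|/\lambda$, which gives pointwise
\[
\min\{\lambda^{p^-},\lambda^{p^+}\}\, G\!\left(\frac{|u(x)|}{\lambda}\right)\;\le\; G(|u(x)|)\;\le\; \max\{\lambda^{p^-},\lambda^{p^+}\}\, G\!\left(\frac{|u(x)|}{\lambda}\right).
\]
Integrating over $\Omega$ and using the calibration $\Phi_{G,\Omega}(u/\lambda)=1$ yields exactly
\[
\xi^-(\|u\|_G)\;\le\;\Phi_{G,\Omega}(u)\;\le\;\xi^+(\|u\|_G),
\]
which is (i). The case $u\equiv 0$ is trivial, as both sides vanish.

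For part (ii), I would repeat the same two-step argument, but now applied to the Gagliardo modular. Setting $\lambda:=[u]_{s,G}>0$, the function $\mu\mapsto \Phi_{s,G}(u/\mu)$ is again non-increasing and continuous (one invokes \eqref{L2} together with dominated convergence on $\RR^n\times\RR^n$ with respect to $d\mu$), so $\Phi_{s,G}(u/\lambda)=1$. Applying \eqref{L1} pointwise with $a=\lambda$ and $b=|D_s u(x,y)|/\lambda$ and integrating against the measure $d\mu=|x-y|^{-n}dx\,dy$ gives the same two-sided bound
\[
\xi^-([u]_{s,G})\;\le\;\Phi_{s,G}(u)\;\le\;\xi^+([u]_{s,G}).
\]
The only step that requires a little care — and which I would regard as the main (though still routine) obstacle — is justifying the calibration $\Phi(u/\lambda)=1$: strictly speaking, one needs to know that the infimum in the Luxemburg norm is attained, which relies on continuity of the modular with respect to the scaling parameter, a fact that ultimately rests on the $\Delta_2$-condition $\Phi(2v)\le \mathbf{C}\,\Phi(v)$ guaranteed by \eqref{cond.intro} via \eqref{L2}. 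Once this is in place, the remainder is just arithmetic.
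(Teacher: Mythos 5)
Your proof is correct: the paper itself gives no argument for this lemma (it simply cites \cite[Lemma 3.1]{Sabri1} and \cite[Lemma 2.1]{Fukagai}), and your scaling argument via \eqref{L1} with $a=\lambda=\|u\|_G$, $b=|u|/\lambda$ is exactly the standard proof used in those references. You also correctly identify and resolve the only delicate point, namely that the $\Delta_2$-condition (from the right-hand inequality in \eqref{cond.intro}) gives continuity of $\mu\mapsto\Phi(u/\mu)$ and hence the calibration $\Phi(u/\lambda)=1$, which is needed for the lower bound.
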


\subsection{The fractional $g$-Laplacian operator}

Let $G$ be a Young function such that $G'=g$ and $s\in(0,1)$. As anticipated, the \emph{fractional $g-$Laplacian operator} is defined as
$$
(-\Delta_g)^s u :=2 \,\text{p.v.} \int_{\RR^n} g( |D_s u|) \frac{D_s u}{|D_s u|} \frac{dy}{|x-y|^{n+s}},
$$
where p.v. stands for {\em in principal value}. This operator  is well defined between $W^{s,G}(\RR^n)$ and its dual space $W^{-s,G^*}(\RR^n)$. In fact, in \cite[Theorem 6.12]{FBS} the following representation formula  is provided
$$
\langle (-\Delta_g)^s u,v \rangle =   \iint_{\RR^n\times\RR^n} g(|D_s u|) \frac{D_s u}{|D_s u|}  D_s v \,d\mu,
$$
for any $v\in W^{s,G}(\RR^n)$.

On the other hand, the \emph{censored or regional fractional $g-$Laplacian} is well defined between $W^{s,G}_{reg}(\Omega)$ and its dual space and it is defined as
$$
(-\Delta_g)_\Omega^s u :=2 \,\text{p.v.} \int_\Omega g( |D_s u|) \frac{D_s u}{|D_s u|} \frac{dy}{|x-y|^{n+s}},
$$
which acts as
$$
\langle (-\Delta_g)_\Omega^s u,v \rangle =   \iint_{\Omega\times\Omega} g(|D_s u|) \frac{D_s u}{|D_s u|}  D_s v \,d\mu,
$$
for any $v\in W^{s,G}_{reg}(\Omega)$.

\subsection{Integration by parts formula} \label{sec.partes}
Here we prove an integration by parts formula in our settings which exploits the divergence form of the operator. We introduce the following notation
$$
\langle (-\Delta_g)^s u,v \rangle_* =\frac12   \int_{\mathbb{R}^{2n} \setminus (\Omega^c)^2} g(|D_s u|) \frac{D_s u}{|D_s u|}  D_s v \,d\mu,
$$
the modular
$$
\Phi_{s,G,*}(u)=\iint_{\RR^{2n}\setminus (\Omega^c)^2} G(|D_s u(x,y)|)\,d\mu
$$
and the corresponding Luxemburg semi-norm
$$
[u]_{W^{s,G}_*(\RR^n)} = \inf\left\{ \lambda>0\colon  \Phi_{s,G,*}\left(\frac{u}{\lambda}\right) \leq 1 \right\}.
$$
Of course, it is naturally defined the space
$$
W^{s,G}_*(\Omega):=\{u\in L^G(\Omega)  \colon \Phi_{s,G,*}(u)<\infty\}.
$$

\begin{proposition}\label{integration.by.parts}
Given  $u\in \X$, the following holds.
\begin{itemize}
\item[(i)]
The following version of the divergence theorem  is true
$$
\int_\Omega (-\Delta_g)^s u = - \int_{\mathbb{R}^n \setminus \Omega} \mathcal{N}_g u.
$$

\item[(ii)] More generally, we have the following integration by parts formula
$$
\langle (-\Delta_g)^s u,v \rangle_{*} = \int_{\Omega} v  (-\Delta_g)^s u \ dx + \int_{\mathbb{R}^n \setminus \Omega} v  \mathcal{N}_g u \ dx  \quad  \forall v\in \X.$$
\end{itemize}
\end{proposition}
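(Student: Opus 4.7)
The plan is to exploit two complementary symmetries: the vector field
$$\Psi(x,y):=g(|D_s u(x,y)|)\,\frac{D_s u(x,y)}{|D_s u(x,y)|}$$
is \emph{antisymmetric} under the swap $(x,y)\mapsto(y,x)$, since $D_s u$ is; meanwhile the kernel $|x-y|^{-n-s}$, the measure $d\mu$, and the domain $\RR^{2n}\setminus(\Omega^c)^2$ are all \emph{symmetric}. These two facts, together with an appropriate decomposition of the integration domain and Fubini's theorem, drive both identities.

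For (i), I would split
$$\int_\Omega(-\Delta_g)^s u(x)\,dx=\iint_{\Omega\times\Omega}\Psi(x,y)\,\frac{dy\,dx}{|x-y|^{n+s}}+\iint_{\Omega\times(\RR^n\setminus\Omega)}\Psi(x,y)\,\frac{dy\,dx}{|x-y|^{n+s}}.$$
The first double integral vanishes by antisymmetry of $\Psi$ against the symmetric kernel (rename $x\leftrightarrow y$ and add). In the second, Fubini allows me to integrate first in $x\in\Omega$ for fixed $y\in\RR^n\setminus\Omega$; renaming variables in the definition of $\mathcal{N}_g u(y)$ and using $\Psi(y,x)=-\Psi(x,y)$ identifies $\int_\Omega\Psi(x,y)\,|x-y|^{-n-s}\,dx=-\mathcal{N}_g u(y)$, yielding $-\int_{\RR^n\setminus\Omega}\mathcal{N}_g u\,dy$ as required.

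For (ii), starting from the definition
$$\langle(-\Delta_g)^s u, v\rangle_*=\frac12\iint_{\RR^{2n}\setminus(\Omega^c)^2}\Psi(x,y)(v(x)-v(y))\,\frac{dx\,dy}{|x-y|^{n+s}},$$
I would break the factor $v(x)-v(y)$ into two pieces. Renaming $x\leftrightarrow y$ in the $v(y)$-piece and using the antisymmetry of $\Psi$ folds it onto the $v(x)$-piece, giving
$$\langle(-\Delta_g)^s u, v\rangle_*=\iint_{\RR^{2n}\setminus(\Omega^c)^2}\Psi(x,y) v(x)\,\frac{dx\,dy}{|x-y|^{n+s}}.$$
Then I would decompose (disjointly up to the negligible set where $\partial\Omega$ enters) as $\RR^{2n}\setminus(\Omega^c)^2=(\Omega\times\RR^n)\cup\bigl((\RR^n\setminus\Omega)\times\Omega\bigr)$, and apply Fubini with the inner integral in $y$. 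The first piece yields $\int_\Omega v\,(-\Delta_g)^s u\,dx$ by the very definition of $(-\Delta_g)^s u$, and the second yields $\int_{\RR^n\setminus\Omega}v\,\mathcal{N}_g u\,dx$.

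The main obstacle is the rigorous handling of the principal value and of Fubini across the diagonal $\{x=y\}$. For $u,v\in\X$ the paired integrand $g(|D_s u|)|D_s v|$ is $d\mu$-integrable on $\RR^{2n}\setminus(\Omega^c)^2$ by H\"older's inequality in Orlicz spaces combined with Lemma~\ref{lemita.1} (controlling $\tilde G(g(|D_s u|))$ by $G(|D_s u|)$), which justifies Fubini in the pre-redistributed form. However, after redistributing to $\Psi(x,y) v(x)|x-y|^{-n-s}$, absolute integrability near the diagonal on $\Omega\times\Omega$ may fail; I would therefore carry out the symmetrization and folding on the truncated regions $\{|x-y|>\varepsilon\}$ and pass to the limit $\varepsilon\to 0$, so that the singular diagonal contributions cancel by symmetry \emph{before} the limit is taken, recovering the principal-value definition at the end.
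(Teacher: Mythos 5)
Your computational core is the same as the paper's: the identical decomposition $\RR^{2n}\setminus(\Omega^c)^2=(\Omega\times\RR^n)\cup\big((\RR^n\setminus\Omega)\times\Omega\big)$, the cancellation of the $\Omega\times\Omega$ contribution by antisymmetry of $g(|D_su|)\frac{D_su}{|D_su|}$ against the symmetric kernel for (i), and the folding of $v(x)-v(y)$ onto $v(x)$ (absorbing the $\tfrac12$) for (ii). You differ in how rigor is supplied: the paper reduces to $u\in C_c^2(\RR^n)$ by density (citing \cite{DNFBS}) and computes formally, leaving the passage to general $u\in\X$ implicit, whereas you stay in $\X$, justify Fubini for the paired form via the H\"older inequality for Young functions plus Lemma \ref{lemita.1}, and recover the principal value through the truncation $\{|x-y|>\varepsilon\}$; this is more self-contained and more general, at the cost of extra bookkeeping. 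One point to make explicit: after folding, the integrand $g(|D_su|)\frac{D_su}{|D_su|}\,v(x)\,|x-y|^{-n-s}$ on $\Omega\times(\RR^n\setminus\Omega)$ carries no difference $v(x)-v(y)$ and no symmetry cancellation, so its absolute convergence near $\partial\Omega$ (hence the separate finiteness of $\int_\Omega v\,(-\Delta_g)^su\,dx$ and $\int_{\RR^n\setminus\Omega}v\,\mathcal{N}_gu\,dx$) is not automatic for general $u,v\in\X$; either add a regularization/density step there (as the paper tacitly does) or present those two terms as a combined limit of the truncated integrals.
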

\begin{proof}
In light of \cite{DNFBS}[Proposition 2.9], it suffices with proving the result for $u\in C_c^2(\RR^n)$.

Let us prove $(i)$. Observe that, since the role of $x$ and $y$ are symmetric, we get
$$
\int_\Omega \int_\Omega g\left( |D_s u|\right)\frac{u(x)}{|D_s u|} \frac{dxdy}{|x-y|^{n+s}} =
\int_\Omega \int_\Omega g\left( |D_s u|\right)\frac{u(y)}{|D_s u|} \frac{dxdy}{|x-y|^{n+s}}
$$
from where it is immediate that
$$
\int_\Omega \int_\Omega g\left( |D_s u|\right)\frac{D_s u}{|D_s u|} \frac{dxdy}{|x-y|^{n+s}} =0.
$$
Hence, we have that
\begin{align*}
\int_\Omega (-\Delta_g)^s u(x) \,dx  &=   \int_\Omega \int_{\mathbb{R}^n} g\left( |D_s u|\right)\frac{D_s u}{|D_s u|} \frac{dy dx}{|x-y|^{n+s}}\\
&=   \int_\Omega \int_{\mathbb{R}^n\setminus  \Omega} g\left( |D_s u|\right)\frac{D_s u}{|D_s u|} \frac{dy dx}{|x-y|^{n+s}}\\
&=    \int_{\mathbb{R}^n\setminus  \Omega} \left(\int_\Omega g\left( |D_s u|\right)\frac{D_s u}{|D_s u|} \frac{dx}{|x-y|^{n+s}} \right)dy\\
&=-\int_{\mathbb{R}^n\setminus \Omega} \mathcal{N}_gu(y)\,dy.
\end{align*}
as desired. Now, let us prove (ii). Since $\mathbb{R}^{2n} \setminus (\Omega^c)^2 = (\Omega\times \mathbb{R}^n) \cup [(\mathbb{R}^n\setminus \Omega) \times \Omega]$, we get
\begin{align*}
\langle (-\Delta_g)^s u,v \rangle_*&= \int_{\Omega} v(x) \left(\int_{\mathbb{R}^n}  g(|D_s u|) \frac{D_s u}{|D_s u|}   \frac{dy}{|x-y|^{n+s}}\right)dx\\
&\quad +  \int_{\mathbb{R}^n\setminus \Omega} v(x) \left( \int_\Omega  g(|D_s u|) \frac{D_s u}{|D_s u|}  \frac{dy}{|x-y|^{n+s}} \right)dx.
\end{align*}
In light of \eqref{g.laplacian} and \eqref{normal} we obtain the desired relation.
\end{proof}

\begin{remark}
If we consider the function  $w_{s,\Omega}(x)=\int_\Omega \int_{\RR^n\setminus \Omega} g(|x-y|^{-s})|x-y|^{n+s}\,dy$ and the normalization of $\mathcal{N}_g$ given by $ \tilde{\mathcal{N}}_g(x):=\frac{\mathcal{N}_g(x)}{w_{s,\Omega}(x)}$, if $\tilde{\mathcal{N}}_g(x)=1$ for any $x\in \RR^n \setminus \bar \Omega$, we can define a generalization of the fractional perimeter defined in \cite{CRS}  as follows
$$
\int_{\RR^n\setminus \Omega} \mathcal{N}_g \,dx = \int_{\RR^n\setminus \Omega} w_{s,\Omega}\,dx = \int_\Omega \int_{\RR^n\setminus \Omega} g\left(\frac{1}{|x-y|^s}\right)\frac{dxdy}{|x-y|^{n+s}}:=\emph{Per}_{s,g}(\Omega).
$$
\end{remark}

\section{Variational setting and proofs of Theorems \ref{Three solution} and \ref{teo2}} \label{sec.3}

We start defining the notion of weak solution for problem \eqref{eq}. With that end it will be useful introducing the following  functional settings. Let us denote
$$
\mathcal{X }:= \{ u\colon \mathbb{R}^n\to \mathbb{R} \text{ measurable s.t.} \colon \|u\|_\mathcal{X} <\infty  	\}
$$
where
$$
\|u\|_\mathcal{X }:=[u]_{W^{s,G}_*(\RR^n)}+ \|u\|_{L^G(\Omega)} +\| u\|_{L^{G,\beta}(\Omega^c)},
$$
and
$$
\| u\|_{L^{G,\beta}(\Omega^c)}= \inf\left\{ \lambda>0\colon \int_{\mathbb{R}^n\setminus\Omega} \beta\ G\left(\frac{u}{\lambda}\right)\,d\mu \leq 1 \right\}.
$$

By following standard arguments it can be seen  that $\mathcal{X}$ is a reflexive Banach space with respect to the norm $\|\cdot\|_{\mathcal{X}}$. See for instance \cite{DNFBS}.

The integration by parts formula given in Proposition \ref{integration.by.parts} leads to the following definition.
\begin{definition}
  We say that $u\in \mathcal{X }$ is a \emph{weak solution} of \eqref{eq} if
$$
\langle (-\Delta_g)^s u,v \rangle_* +\int_{\Omega} g(|u|)\frac{u}{|u|}v\, dx= \lambda\int_\Omega fv\, dx+\mu \int_\Omega hv\, dx - \int_{\mathbb{R}^n\setminus \Omega} \beta g(|u|)\frac{u}{|u|}v\, dx
$$
for all $v\in \mathcal{X}$.
\end{definition}

As anticipated in the introduction,  we will approach problem \eqref{eq} through the machinery of variational methods,  and in particular, it will be done by using the abstract multiplicity result given in Theorem \ref{Ricceri}. With that aim, we  consider the functional
$\Psi\colon \mathcal{X} \to \mathbb{R}$  defined as
$$
  \Psi(u):=\mathcal{J}(u)-\lambda \mathcal{F}(u)-\mu \mathcal{H}(u)
$$
for every $u\in \mathcal{X}$, where  $\mathcal{J}, \mathcal{F}, \mathcal{H}\colon \mathcal{X} \to \mathbb{R}$  are defined as
$$
\mathcal{J}(u):=\int_{\mathbb{R}^{2n} \setminus (\Omega^c)^2} G(|D_s u|)\,d\mu+ \int_{\Omega}G(|u|)dx +\int_{\mathbb{R}^{n}\backslash\Omega}\beta G(|u|)dx,
$$
$$
\mathcal{F}(u)=\int_{\Omega}F(x,u)dx\ \ \text{and}\ \ \mathcal{H}(u)=\int_{\Omega}H(x,u)dx.
$$

The following compact embedding for the space $\X$ holds.
\begin{lemma}\label{embedding}
Given a Young function $A$ such that $A\prec\prec G_*$, then the embedding $\X\hookrightarrow L^A(\Omega)$ is compact.
\end{lemma}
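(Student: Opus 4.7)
The plan is to reduce the claim to the compact embedding for the regional fractional Orlicz-Sobolev space $W^{s,G}_{reg}(\Omega)$ already recorded in Theorem \ref{ceb}(ii). The key geometric observation is the set identity
$$
\Omega\times\Omega \subset \RR^{2n}\setminus (\Omega^c)^2,
$$
so the modular controlling $\X$ dominates the modular defining $W^{s,G}_{reg}(\Omega)$. More precisely, for every $u\in\X$ and every $\lambda>0$,
$$
\iint_{\Omega\times\Omega} G\!\left(\left|D_s\!\left(\tfrac{u}{\lambda}\right)\right|\right)d\mu \;\leq\; \iint_{\RR^{2n}\setminus(\Omega^c)^2} G\!\left(\left|D_s\!\left(\tfrac{u}{\lambda}\right)\right|\right)d\mu \;=\; \Phi_{s,G,*}\!\left(\tfrac{u}{\lambda}\right),
$$
which, by the very definition of the Luxemburg semi-norms, yields $[u]_{W^{s,G}_{reg}(\Omega)}\le [u]_{W^{s,G}_*(\RR^n)}$.

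With this at hand, the first step is to show that the restriction operator $R\colon \X\to W^{s,G}_{reg}(\Omega)$, $u\mapsto u|_\Omega$, is well defined and continuous. Indeed, combining the above inequality with the trivial bound $\|u\|_{L^G(\Omega)}\le \|u\|_\X$, one gets
$$
\|R u\|_{W^{s,G}_{reg}(\Omega)} = \|u\|_{L^G(\Omega)} + [u]_{W^{s,G}_{reg}(\Omega)} \;\leq\; \|u\|_{L^G(\Omega)} + [u]_{W^{s,G}_*(\RR^n)} \;\leq\; \|u\|_\X,
$$
so $R$ is a bounded linear map.

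The second step is to invoke Theorem \ref{ceb}(ii): since $A\prec\prec G_*$ and $\Omega$ is a Lipschitz bounded domain, the inclusion $W^{s,G}_{reg}(\Omega)\hookrightarrow L^A(\Omega)$ is compact. Composing with $R$, the inclusion $\X\hookrightarrow L^A(\Omega)$ factors as $\X\xrightarrow{R} W^{s,G}_{reg}(\Omega)\hookrightarrow L^A(\Omega)$, which is the composition of a bounded operator with a compact one, hence compact.

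There is essentially no obstacle here beyond checking the inclusion $\Omega\times\Omega\subset \RR^{2n}\setminus(\Omega^c)^2$ and quoting Theorem \ref{ceb}(ii); the whole argument is linear and uses nothing beyond the structure of the Luxemburg norm. Note that the boundary contribution $\|u\|_{L^{G,\beta}(\Omega^c)}$ plays no role in this embedding: it serves only to make $\X$ a Banach space suited to the Robin condition, whereas compactness inside $\Omega$ is governed entirely by the part of the energy measured over $\Omega\times\RR^n$.
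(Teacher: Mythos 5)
Your argument is correct and follows essentially the same route as the paper: both hinge on the inclusion $\Omega\times\Omega\subset\RR^{2n}\setminus(\Omega^c)^2$ to get $[u]_{W^{s,G}_{reg}(\Omega)}\leq [u]_{W^{s,G}_*(\RR^n)}$ via the Luxemburg norm, and then conclude by the compact embedding $W^{s,G}_{reg}(\Omega)\hookrightarrow L^A(\Omega)$ of Theorem \ref{ceb}(ii). Your phrasing as a bounded restriction operator composed with a compact embedding is just a slightly more explicit packaging of the paper's estimate $\|u\|_{L^A(\Omega)}\leq c([u]_{W^{s,G}_*(\RR^n)}+\|u\|_{L^G(\Omega)})\leq c\|u\|_\X$.
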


\begin{proof}
  Let $u\in \X$. Observe that $[u]_{W^{s,G}_{reg}(\Omega)}\leq [u]_{W^{s,G}_*(\RR^n)}$ as a consequence of the inequality	
  $$
  \int_{\Omega\times\Omega}G\left(\frac{|D_s u|}{\|u\|_*}\right)\,d\mu\leq \int_{\mathbb{R}^{2n} \setminus (\Omega^c)^2} G\left(\frac{|D_s u|}{\|u\|_*}\right)\,d\mu\leq 1
  $$
  together with the definition of the Luxemburg norm. Then, from Theorem \ref{ceb}, there exists a constant $c>0$ such that
  $$
  \|u\|_{L^A(\Omega)}\leq c[u\|_{W^{s,G}_{reg}(\Omega)} \leq c([u]_{W^{s,G}_*(\RR^n)} + \|u\|_{L^G(\Omega)})\leq c\|u\|_\X
  $$
  concluding the proof due to the compactness of $W^{s,G}_{reg}(\Omega)$ into $L^{A}(\Omega)$.
\end{proof}

The next proposition proves the well-posedness of $\Psi$.
\begin{proposition}
Let $f,h\in \mathcal{A}$, then the functional $\Psi$ is well defined on the space $\mathcal{X}$.
\end{proposition}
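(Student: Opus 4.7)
The plan is to split $\Psi = \mathcal{J} - \lambda \mathcal{F} - \mu \mathcal{H}$ and show each piece is finite (and real-valued) on every $u \in \X$.

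For $\mathcal{J}(u)$, the three summands correspond exactly to the three modulars defining the norm $\|u\|_\X$. Since $u \in \X$ gives $[u]_{W^{s,G}_*(\RR^n)}$, $\|u\|_{L^G(\Omega)}$ and $\|u\|_{L^{G,\beta}(\Omega^c)}$ all finite, I would apply the standard modular-norm comparison (Lemma \ref{ineq1}, and its obvious analogues for $\Phi_{s,G,*}$ and the $\beta$-weighted modular, which follow from \eqref{cond.intro} via the same proof) to conclude each modular $\Phi_{s,G,*}(u)$, $\Phi_{G,\Omega}(u)$ and $\int_{\RR^n\setminus\Omega}\beta G(|u|)\,dx$ is bounded by $\max(\|u\|^{p^-},\|u\|^{p^+})$ in the corresponding norm.

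For $\mathcal{F}$ and $\mathcal{H}$, the argument is identical so I would only write it for $\mathcal{F}$. Using $f \in \mathcal{A}$ and integrating the growth bound \eqref{f1} from $0$ to $|t|$ gives
\[
|F(x,t)| \leq w(x)\bigl(|t| + M(|t|)\bigr) \quad \text{for a.e. } x\in\Omega,\ t\in\RR,
\]
so that
\[
|\mathcal{F}(u)| \leq \|w\|_{L^\infty(\Omega)}\left( \int_\Omega |u|\,dx + \int_\Omega M(|u|)\,dx \right).
\]
The first integral is finite because $\Omega$ is bounded and $u \in L^G(\Omega) \hookrightarrow L^1(\Omega)$. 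For the second integral I would invoke the hypothesis $M \prec\prec G_*$ together with the embedding $\X \hookrightarrow L^{G_*}(\Omega)$ (which follows from Lemma \ref{embedding} in the sub-critical regime, or from $[u]_{W^{s,G}_{reg}(\Omega)} \leq [u]_{W^{s,G}_*(\RR^n)}$ combined with Theorem \ref{ceb}(i)).

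The main obstacle is passing from $\|u\|_{L^M(\Omega)} < \infty$ to $\int_\Omega M(|u|)\,dx < \infty$, because in general a Young function need not be doubling. I would bypass this by using directly the definition of $M \prec\prec G_*$: fixing $a = 1$, there exists $x_1 \geq 0$ with $M(t) \leq G_*(t)$ for $t \geq x_1$, so
\[
\int_\Omega M(|u|)\,dx \leq M(x_1)|\Omega| + \int_{\{|u|>x_1\}} G_*(|u|)\,dx,
\]
and the last integral is finite by $u \in L^{G_*}(\Omega)$ combined with the doubling-type control \eqref{L1} applied to $G_*$ (which one checks satisfies an analogue of \eqref{cond.intro} under \eqref{G3}). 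This yields $|\mathcal{F}(u)| < \infty$; the same argument with $h$ replacing $f$ gives $|\mathcal{H}(u)| < \infty$, completing the proof.
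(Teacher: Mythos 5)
Your treatment of $\mathcal{J}$ coincides with the paper's (modular--norm comparison via Lemma \ref{ineq1} and its obvious analogues), and your handling of $\mathcal{F}$, $\mathcal{H}$ takes a genuinely different route: you integrate \eqref{f1} and estimate the modular $\int_\Omega M(|u|)\,dx$ directly, whereas the paper keeps the bound $\int_\Omega |u|\,m(|u|)\,dx$, applies H\"older's inequality for the complementary pair $(M,\tilde M)$, Lemma \ref{lemita.1}, and the embedding $\X\hookrightarrow L^M(\Omega)$ of Lemma \ref{embedding}. However, your route has one step that, as written, is a gap: the finiteness of $\int_{\{|u|>x_1\}} G_*(|u|)\,dx$. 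Membership $u\in L^{G_*}(\Omega)$ (via Theorem \ref{ceb}(i) and $[u]_{W^{s,G}_{reg}(\Omega)}\le[u]_{W^{s,G}_*(\RR^n)}$) only yields $\int_\Omega G_*(|u|/\lambda)\,dx\le 1$ for $\lambda$ at (or slightly above) $\|u\|_{L^{G_*}(\Omega)}$; to pass to the modular at scale $1$ you invoke an \eqref{L1}-type inequality for $G_*$, asserting that $G_*$ ``satisfies an analogue of \eqref{cond.intro} under \eqref{G3}''. This cannot be taken for granted: the natural estimate $G_*^{-1}(K\sigma)\ge K^{1/p^+-s/n}\,G_*^{-1}(\sigma)$ produces a $\Delta_2$-type condition for $G_*$ only when $p^+<n/s$, and the hypotheses \eqref{cond.intro} and \eqref{G3} only force $p^-<n/s$ (the convergence of the first integral in \eqref{G3} constrains $p^-$, not $p^+$). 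So the claimed doubling of $G_*$ is unproven in the stated generality, and your argument currently leans on it.

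The gap is easy to close, in either of two ways. First, use the definition of $M\prec\prec G_*$ with $a=1/\lambda$ instead of $a=1$: there exists $x_a$ with $M(t)\le G_*(t/\lambda)$ for $t\ge x_a$, hence $\int_\Omega M(|u|)\,dx\le M(x_a)|\Omega|+\int_\Omega G_*(|u|/\lambda)\,dx\le M(x_a)|\Omega|+1$, and no growth condition on $G_*$ is needed at all. Second, you could note that in this paper $M$ is tacitly assumed to satisfy a condition of type \eqref{cond} (Lemma \ref{lemita.1} is applied to $M$ in the paper's proof, and the proof of Lemma \ref{Jt} explicitly says ``$M$ satisfies \eqref{cond}''); under that standing assumption, Lemma \ref{ineq1} applied to $M$ converts $\|u\|_{L^M(\Omega)}<\infty$, which Lemma \ref{embedding} provides, directly into $\int_\Omega M(|u|)\,dx<\infty$, so the obstacle you describe (``$M$ need not be doubling'') does not actually arise in the paper's setting. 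A minor additional point: Lemma \ref{embedding} cannot be quoted with $A=G_*$, since $G_*\prec\prec G_*$ fails; only your second justification of $\X\hookrightarrow L^{G_*}(\Omega)$, through Theorem \ref{ceb}(i) and the seminorm comparison, is the correct one.
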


\begin{proof}
First, we notice that given $u\in \mathcal{X}$, from Lemma \ref{ineq1} it follows that $\mathcal{J}(u)\leq C \xi^+(\|u\|_\mathcal{X})$ for some constant $C=C(p^\pm)$. Moreover, by \eqref{f1} and the fact that $m$ is increasing we get
$$
\int_{\Omega}F(x,u)\,dx \leq \int_\Omega w(x)\int_0^u  m(|t|)\,dt\,dx \leq  \|w\|_\infty    \int_\Omega |u| m(|u|)\,dx.
$$
In light of Lemma \ref{lemita.1}, $m(|u|)\in L^{\tilde M}(\Omega)$, and then, by applying H\"older's inequality for Young function we get that
$$
\int_\Omega |u| m(|u|)\,dx \leq \|u\|_{L^M(\Omega)} \|m(u)\|_{L^{\tilde M}(\Omega)}.
$$
Observe that \cite[Theorem 3.17.1]{FJK} and Lemma \ref{lemita.1} give that $\|m(u)\|_{L^{\tilde M}(\Omega)}\leq c \|u\|_{L^M(\Omega)}$. Moreover, from Lemma \ref{embedding} it follows that $\|u\|_{L^M(\Omega)} \leq c \|u\|_\X$, and therefore $\mathcal{F}$ is well defined.

The well-posedness of $\mathcal{H}$ follows analogously, concluding that $\Psi$ is well defined on $\X$.
\end{proof}

Next, we prove some useful properties of the functional $\mathcal{J}$.

\begin{lemma}\label{Phit} Assume that \eqref{cond.intro}, \eqref{G2} and \eqref{G3} hold. Then,
   \begin{enumerate}
     \item [(i)] the functional $\mathcal{J}: \mathcal{X}\rightarrow \mathbb{R} $ is $C^{1}$ with derivative given by
  $$
  \langle\mathcal{J}'(u),v\rangle=\langle(-\Delta_{g})^{s}u,v\rangle_*+\int_{\Omega} g(|u(x)|)\frac{u}{|u|}v(x)dx+\int_{\mathbb{R}^{n}\backslash\Omega}\beta\ g(|u(x)|)v(x)dx
  $$
  for all $u,v\in \mathcal{X}$;

  \item [(ii)] $\mathcal{J}$ is coercive, sequentially weakly lower semicontinuous;

  \item [(iii)] $\mathcal{J}\in \mathcal{W}_{\mathcal{X}}$, where the class $\mathcal{W}_{\mathcal{X}}$ is given in Definition \eqref{wx};

  \item [(iv)] $\mathcal{J}$ is bounded on each bounded subset of $\X$ and its derivative admits a continuous inverse on $\mathcal{X}^{*}$.
   \end{enumerate}
\end{lemma}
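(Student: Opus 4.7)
The plan is to dispatch the four claims in order, using the growth condition \eqref{cond.intro} together with Lemma \ref{ineq1} for the energy estimates, and using the pointwise convexity inequality Lemma \ref{lemita} (which encodes \eqref{G2}) for the strict convexity and strong-convergence arguments. Throughout, the three terms composing $\mathcal{J}$ will be handled in parallel, since each is an integral of $G$ against either $d\mu$ on $\RR^{2n}\setminus(\Omega^c)^2$, $dx$ on $\Omega$, or $\beta\,dx$ on $\RR^n\setminus\Omega$.

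For (i), I would first compute the G\^ateaux derivative termwise by differentiating under the integral sign: for the nonlocal term the chain rule gives
$$
\frac{d}{dt}\bigg|_{t=0} G(|D_s(u+tv)|) = g(|D_s u|)\frac{D_s u}{|D_s u|}\,D_s v,
$$
and similarly for the two local terms. To interchange derivative and integral I would dominate the difference quotients using monotonicity of $g$ combined with Young's inequality \eqref{Young} and Lemma \ref{lemita.1} to produce an integrable majorant. To upgrade G\^ateaux to Fr\'echet (hence $C^1$), I would show that $u\mapsto\mathcal{J}'(u)$ is norm-to-norm continuous from $\X$ into $\X^*$ via dominated convergence and the Lipschitz property \eqref{L_3} of $G$ on bounded sets. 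Part (ii) is then straightforward: coercivity follows from Lemma \ref{ineq1}, since each of the three pieces of $\mathcal{J}$ dominates $\xi^-$ of the associated (semi)norm, so $\mathcal{J}(u)\to\infty$ as $\|u\|_\X\to\infty$; weak lower semicontinuity follows because $\mathcal{J}$ is a sum of convex continuous functionals on the reflexive space $\X$.

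For (iii), given $u_n\rightharpoonup u$ with $\liminf_n\mathcal{J}(u_n)\le\mathcal{J}(u)$, I would set $v_n=(u_n+u)/2$ and $w_n=(u_n-u)/2$ and apply Lemma \ref{lemita} pointwise to each of the three integrands of $\mathcal{J}$ to obtain
$$
\frac{\mathcal{J}(u_n)+\mathcal{J}(u)}{2}\ge \mathcal{J}(v_n)+\mathcal{J}(w_n).
$$
Since $v_n\rightharpoonup u$, weak lower semicontinuity from (ii) gives $\liminf\mathcal{J}(v_n)\ge \mathcal{J}(u)$; together with the hypothesis on $u_n$ this forces $\mathcal{J}(w_n)\to 0$, and Lemma \ref{ineq1} then converts the modular convergence into $\|w_n\|_\X\to 0$, i.e., $u_n\to u$ strongly. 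For (iv), boundedness on bounded sets is immediate from Lemma \ref{ineq1}. For the continuous invertibility of $\mathcal{J}':\X\to\X^*$, I would combine strict monotonicity (which descends from the strict convexity of $\mathcal{J}$ supplied again by Lemma \ref{lemita}), coercivity (from \eqref{cond.intro}, (i), and Lemma \ref{ineq1}), and hemicontinuity (from (i)) to invoke the Minty--Browder theorem, obtaining bijectivity of $\mathcal{J}'$. Continuity of the inverse then follows from the $(S_+)$-type property proved in (iii): if $\mathcal{J}'(u_n)\to\mathcal{J}'(u)$ in $\X^*$, testing against $u_n-u$ together with weak compactness and part (iii) yields strong convergence $u_n\to u$.

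The main obstacle lies in (iii) and the $(S_+)$-step of (iv): one must carefully couple the pointwise convexity inequality \eqref{G2}/Lemma \ref{lemita} with the modular-norm equivalence of Lemma \ref{ineq1} so as to merge the three heterogeneous components of $\mathcal{J}$ (the singular $D_s u$ piece against $d\mu$, the Lebesgue $\int_\Omega G(|u|)\,dx$ piece, and the weighted $\int_{\Omega^c}\beta G(|u|)\,dx$ piece) into a single strong-convergence statement. Once this packaging is achieved, the remaining steps are routine applications of the now-standard Orlicz--Sobolev toolkit developed in Section \ref{sec.prel}.
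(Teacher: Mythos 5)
Your proposal is correct and follows essentially the same route as the paper: coercivity and boundedness on bounded sets from Lemma \ref{ineq1}, the $\mathcal{W}_{\mathcal{X}}$ property from the convexity inequality of Lemma \ref{lemita} applied to all three components of $\mathcal{J}$, and invertibility of $\mathcal{J}'$ via Browder--Minty. Two points of comparison. In (iii) the paper argues by contradiction while you argue directly; your version works, but note that from $\liminf_k\mathcal{J}(u_k)\le\mathcal{J}(u)$ you may only conclude $\mathcal{J}\bigl(\tfrac{u_k-u}{2}\bigr)\to0$ after passing to a subsequence along which $\mathcal{J}(u_k)$ converges to its liminf (which, by weak lower semicontinuity, equals $\mathcal{J}(u)$); this suffices because membership in $\mathcal{W}_{\mathcal{X}}$ only requires a strongly convergent subsequence. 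In (iv) the paper does not stop at strict monotonicity: it proves the uniform monotonicity estimate $\langle\mathcal{J}'(u)-\mathcal{J}'(v),u-v\rangle\ge 4\mathcal{J}\bigl(\tfrac{u-v}{2}\bigr)\ge\alpha(\|u-v\|_{\mathcal{X}})$ (again from Lemma \ref{lemita} applied to each of the three pieces), so continuity of $(\mathcal{J}')^{-1}$ follows directly from Zeidler's theorem; your route through strict monotonicity plus an $(S_+)$-type argument is also viable, but it needs the extra Minty-trick step identifying the weak limit before part (iii) can be invoked, whereas the uniform monotonicity bound yields $\|u_k-u\|_{\mathcal{X}}\to0$ in one line from $\langle\mathcal{J}'(u_k)-\mathcal{J}'(u),u_k-u\rangle\to0$. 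Finally, for (i) the paper simply cites \cite[Proposition 4.1]{Salort2} rather than redoing the dominated-convergence computation you sketch; your majorant via Young's inequality and Lemma \ref{lemita.1} is the standard way to carry that out.
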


\begin{proof}
$(i)$ From \cite[Proposition 4.1]{Salort2}, it is easy to see that $\mathcal{J}$ is   class $C^{1}$.

\medskip

$(ii)$ Let $u\in\mathcal{X}$ with $\|u\|_{\mathcal{X}}>1$. In view of Lemma \ref{ineq1}, $\mathcal{J}$ is coercive since
$$
\mathcal{J}(u)   \geq \xi^-(\|u\|_{L^G(\Omega)}) +  \xi^-([u]_{W^{s,G}_*(\RR^n)})  +  \xi^-(\| u\|_{L^{G,\beta}(\Omega^c)}) \geq c  \xi^-( \|u\|_\X ),
$$
where $c>0$ depends only on $p^\pm$. Moreover, the sequential weak lower semicontinuity of $\mathcal{J}$ follows by \cite[Lemma 19]{sabri2}.

\medskip

$(iii)$ Let $\{u_k\}_{k\in\NN}$ be a sequence in $\X$ such that $u_k\rightharpoonup u$ in $\mathcal{X}$ and $\liminf_{k\rightarrow\infty}\mathcal{J}(u_{k})\leq \mathcal{J}(u)$.
Then, by the sequential weak lower semicontinuity of $\mathcal{J}$ proven in (ii) we get that, up to a subsequence,  $\mathcal{J}(u_{k})\rightarrow \mathcal{J}(u)$ as $k\rightarrow+\infty$. Since $\frac{u_k+u}{2}$ converges weakly to $u$, and modulars are lower semicontinuous with respect to the weak convergence, we get
\begin{equation}\label{eee}
  \mathcal{J}(u)\leq \liminf_{k\rightarrow\infty}\mathcal{J}\left(\frac{u_k+u}{2}\right).
\end{equation}
We assume by contradiction that $u_k$ does not converge to $u$ in $\mathcal{X}$. Hence, there exists $ \varepsilon> 0$ such that $\left\| \frac{u_k+u}{2}\right\|_\mathcal{X}>\varepsilon$.
Then, by Lemma \ref{ineq1}
\begin{equation} \label{des1}
\mathcal{J}\left( \frac{u_k+u}{2} \right)>\xi^-(\varepsilon).
\end{equation}
On the other hand, by applying Lemma \ref{lemita} it follows that
$$
\frac{1}{2} \left( G(u)+G(u_k) \right) -G\left(\frac{u_k+u}{2} \right) \geq G\left(\frac{u_k-u}{2} \right),
$$
which together with \eqref{des1} leads to
$$
\frac{1}{2} \left( \mathcal{J}(u)+\mathcal{J}(u_k) \right) -\mathcal{J}\left(\frac{u_k+u}{2} \right) \geq \mathcal{J}\left(\frac{u_k-u}{2} \right) > \xi^-(\varepsilon).
$$
Taking limsup in the above inequality  we obtain that
$$
\mathcal{J}(u)-\xi^-(\varepsilon) \geq \limsup_{k\rightarrow+\infty}\mathcal{J}\left( \frac{u_k+u}{2}\right),
$$
which contradicts \eqref{eee}. Therefore $u_k\to u$ strongly in $\X$, and then $\mathcal{J}\in \mathcal{W}_{\mathcal{X}}$.\\

$(iv)$ When $\|u\|_{\mathcal{X}}\leq \rho$, in light of Lemma \ref{ineq1} we have that $\mathcal{J}(u)\leq \xi^+(\rho)$, i.e., $\mathcal{J}$ is bounded on any bounded subset of $\mathcal{X}$.

We prove now that $\mathcal{J}$ admits a continuous inverse $\mathcal{J}^{-1}\colon \X^* \to \X$ by means of   the monotone operator method introduced by Browder and Minty (see \cite[Theorem 26.A (d)]{Zeidler}). Therefore, it suffices to verify that  $\mathcal{J}'$ is coercive, hemicontinuous and uniformly monotone.

Observe that since $G$ is convex, $\mathcal{J}$ also is convex. Thus $\mathcal{J}(u)\leq \langle\mathcal{J}^{'}(u),u\rangle$ for all $u\in \X$, and, by using Lemma \ref{ineq1}, for any $u\in\X$ such that $\|u\|_{\X}>1$ we have
$$
\frac{\langle\mathcal{J}'(u),u\rangle}{\|u\|_{\mathcal{X}}}\geq \frac{\mathcal{J}(u)}{\|u\|_{\mathcal{X}}}\geq \min\{\|u\|_{\mathcal{X}}^{p^- -1},\|u\|_{\mathcal{X}}^{p^+ -1}\},
$$
from where the coercivity of $\mathcal{J}'$ follows by taking $\|u\|_\X \to \infty$.

Furthermore, since the real function $t\mapsto \langle\mathcal{J}'(u+tv),w\rangle$ is continuous in $[0, 1]$ for any $u,v,w\in \mathcal{X}$, we have that $\mathcal{J}'$ is hemicontinuous.

Let us finally prove that $\mathcal{J}'$ is uniformly monotone. Since $G$ is convex we have that for every $u,v\in\X$ it holds
$$
G(|u|)\leq G\left(\left|\frac{u+v}{2}\right|\right)+g(|u|)\frac{u}{|u|} \frac{u-v}{2} \quad \text{ and } \quad
G(|v|)\leq G\left(\left|\frac{u+v}{2}\right|\right)+g(|v|)\frac{v}{|v|} \frac{v-u}{2}.
$$
Adding the above two relations and integrating over $\Omega$ we find that
\begin{align*}
 \frac{1}{2}\int_{\Omega}& \left(g(|u|)\frac{u}{|u|}-g(|v|) \frac{v}{|v|}\right)(u-v)\, dx\nonumber\\
 &\geq \int_{\Omega}G(|u|)\, dx+\int_{\Omega}G(|v|)\ dx-2\int_{\Omega}G\bigg{(}\bigg{|}\frac{u+v}{2}\bigg{|}\bigg{)}\, dx\quad \forall u,v\in\X.
\end{align*}
On the other hand, we deduce by Lemma \ref{lemita} that
\begin{align*}
  \int_{\Omega}\left( G(|u|)+G(|v|) \right)\,dx
  \geq 2\int_{\Omega}G\left(\frac{u+v}{2} \right)\, dx+ 2\int_{\Omega} G\left(\frac{u-v}{2} \right)\,dx \quad \forall u,v\in\X.
\end{align*}
From the last two relations it follows that
\begin{align*}
  \int_{\Omega}(g(|u|)\frac{u}{|u|}&-g(|v|)\frac{v}{|v|})(u-v)\, dx\geq 4\int_{\Omega} G\left(\frac{|u-v|}{2} \right)\, dx \quad \forall u,v\in\X.
\end{align*}
Similarly, for any $u,v\in \X$ it holds that
\begin{align*}
  \int_{\mathbb{R}^n\backslash\Omega}\mathbf{\beta}\ (g(|u|)\frac{u}{|u|}&-g(|v|)\frac{v}{|v|})(u-v)\ dx\geq 4\int_{\mathbb{R}^n\backslash\Omega}\mathbf{\beta}\  G\left(\frac{|u-v|}{2} \right)\ dx
\end{align*}
and
\begin{align*}
 \langle (-\Delta_g)^s(u-v),u-v \rangle_*\geq 4\int_{\mathbb{R}^{2n} \setminus (\Omega^c)^2} G\left(\frac{|D_s u-D_s v|}{2} \right)\ d\mu.
\end{align*}
Gathering the last three inequalities one gets that
$$
\langle \mathcal{J}'(u)-\mathcal{J}'(v),u-v \rangle\geq 4\mathcal{J}\left(\frac{u-v}{2}\right) \quad \forall u,v\in\X.
$$
Define now the function $\alpha\colon[0,+\infty)\rightarrow[0,+\infty)$ by
$$
\alpha(t)=\frac{1}{p^+ -2}\begin{cases}
                           t^{p^+ -1} & \mbox{for}\ t\leq1 \\
                           t^{p^- -1} & \mbox{for}\ t\geq1.
                          \end{cases}
$$
It is easy to check that $\alpha$ is an increasing function with $\alpha(0)=0$ and $\alpha(t)\to \infty$ as $t\to\infty$.
Taking into account the above information and Lemma \ref{ineq1}, we deduce that
$$
\langle \mathcal{J}'(u)-\mathcal{J}'(v),u-v \rangle\geq \alpha(\|u-v\|_{\mathcal{X}}),
$$
that is, $\mathcal{J}'$ is uniformly monotone, which concludes our proof.
\end{proof}

\begin{lemma}\label{Jt}
  $\mathcal{F}\colon \mathcal{X}\rightarrow \mathbb{R} $ is $C^{1}$ with derivative given by
  \begin{equation}\label{J'}
    \langle \mathcal{F}'(u),v\rangle=\int_{\Omega}f(x,u)v\,dx,
  \end{equation}
  for all $u,v\in \mathcal{X}$.  Moreover, $\mathcal{F}\colon \mathcal{X}\rightarrow \X^*$ is compact.
\end{lemma}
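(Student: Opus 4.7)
The plan is to proceed in three stages: first, compute the Gâteaux derivative of $\mathcal{F}$ via a dominated-convergence argument on the difference quotient; second, deduce Fréchet $C^1$ regularity by showing continuity of $\mathcal{F}'$; third, strengthen continuity to compactness of $\mathcal{F}'\colon\X\to\X^*$. The unifying technical ingredient throughout is the strong continuity of the Nemytskii operator $u\mapsto f(\cdot,u)$ from $L^M(\Omega)$ into $L^{\tilde M}(\Omega)$.

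For the Gâteaux derivative, I fix $u,v\in\X$ and $t\in(0,1]$. The pointwise identity
$$
\frac{F(x,u+tv)-F(x,u)}{t}=\left(\frac{1}{t}\int_0^t f(x,u+\tau v)\,d\tau\right)v
$$
converges a.e.\ to $f(x,u)v$, and by the monotonicity of $m$ together with \eqref{f1} the quotient is dominated by $\|w\|_\infty(1+m(|u|+|v|))|v|$. Lemma \ref{lemita.1} applied to $M$ gives $m(|u|+|v|)\in L^{\tilde M}(\Omega)$, since $|u|+|v|\in L^M(\Omega)$ by the compact embedding $\X\hookrightarrow L^M(\Omega)$ of Lemma \ref{embedding} (valid because $M\prec\prec G_*$). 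Hölder's inequality in Orlicz spaces then furnishes an integrable envelope, so dominated convergence yields \eqref{J'}.

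For compactness of $\mathcal{F}'$, let $u_k\rightharpoonup u$ weakly in $\X$. By Lemma \ref{embedding} (with $A=M$) we have $u_k\to u$ strongly in $L^M(\Omega)$, hence, along a subsequence, pointwise a.e.\ with a common $L^M$-dominating function $h$. The Carathéodory property of $f$ forces $f(\cdot,u_k)\to f(\cdot,u)$ a.e., while \eqref{f1} and Lemma \ref{lemita.1} provide the envelope $\|w\|_\infty(1+m(h))\in L^{\tilde M}(\Omega)$. A Vitali-type argument in the Orlicz setting then gives
$$
f(\cdot,u_k)\longrightarrow f(\cdot,u)\quad\text{strongly in } L^{\tilde M}(\Omega).
$$
Finally, for any $v\in\X$ with $\|v\|_\X\leq 1$, Hölder's inequality in Orlicz spaces and the embedding $\X\hookrightarrow L^M(\Omega)$ yield
$$
|\langle\mathcal{F}'(u_k)-\mathcal{F}'(u),v\rangle|\leq c\,\|f(\cdot,u_k)-f(\cdot,u)\|_{L^{\tilde M}(\Omega)}\,\|v\|_{L^M(\Omega)}\longrightarrow 0,
$$
so $\mathcal{F}'(u_k)\to\mathcal{F}'(u)$ in $\X^*$. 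Specializing to strongly convergent sequences gives continuity of $\mathcal{F}'$, and hence $\mathcal{F}\in C^1(\X,\RR)$.

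The main obstacle is precisely the strong convergence of the Orlicz Nemytskii map into $L^{\tilde M}$. In the Lebesgue setting this is the classical Krasnoselskii theorem, but in the Orlicz framework two nontrivial ingredients are needed: the compatibility between the growth bound \eqref{f1} and the target space $L^{\tilde M}$, which is exactly the role of Lemma \ref{lemita.1}; and a doubling-type condition on $\tilde M$ that upgrades modular convergence to norm convergence. Both requirements are implicitly encoded in the structural assumptions defining the class $\mathcal{A}$.
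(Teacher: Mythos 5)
Your proposal is correct and follows essentially the same route as the paper: the compact embedding $\X\hookrightarrow L^{M}(\Omega)$ from Lemma \ref{embedding}, H\"older's inequality for Young functions, and strong $L^{\tilde M}(\Omega)$ convergence of $f(\cdot,u_k)$ obtained from a.e.\ convergence, the growth bound \eqref{f1} with Lemma \ref{lemita.1}, and the equivalence of modular and norm convergence. You merely spell out the $C^1$ part (which the paper dismisses as ``usual arguments'') and handle the dominating-function/Vitali step a bit more carefully than the paper's dominated-convergence phrasing, which is a welcome refinement rather than a departure.
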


\begin{proof}
Usual arguments show that $\mathcal{F}  \in C^{1}(\mathcal{X},\mathbb{R})$. In order to verify the compactness of $\mathcal{F}$, let $\{u_k\}_{k\in\NN}\subset \mathcal{X}$ be a bounded sequence. Then up to a subsequence $u_k$ weakly converges in $\X$ to $u\in \X$. Moreover, in light of Lemma \ref{embedding}, $u_k\to u$ strongly in $L^M(\Omega)$ and a.e. in $\Omega$.

Fixed $v\in \mathcal{X}$ with $\|v\|_{\mathcal{X}}\leq 1$, thanks to the H\"{o}lder's inequality for Young functions and the embedding of Lemma \ref{embedding} we have
\begin{align*}
|\langle \mathcal{F}'(u_{k}),v \rangle-\langle \mathcal{F}'(u),v \rangle| & =\left| \int_{\Omega}(f(x,u_{k})-f(x,u))v\, dx \right|\\
&\leq \left\| f(\cdot,u_{k}(\cdot))-f(\cdot,u(\cdot))\right\|_{L^{\tilde M}(\Omega)} \left\|v\right\|_{L^M(\Omega)}\\
&\leq c \left\| f(\cdot,u_{k}(\cdot))-f(\cdot,u(\cdot))\right\|_{L^{\tilde M}(\Omega)}\left\| v\right\|_{\mathcal{X}},
\end{align*}
for some $c>0$. Thus, taking supremum for $\|v\|_{\mathcal{X}}\leq 1$, we get
$$\| \mathcal{F}'(u_{k})- \mathcal{F}'(u)\|_{\mathcal{X}^{*}}\leq c\|f(\cdot,u_{k}(\cdot))-f(\cdot,u(\cdot))\|_{L^{\tilde M}(\Omega)}.$$
Being $f\in \mathcal{A}$ we deduce immediately that
$$
f(x,u_{k}(x))- f(x,u(x))\rightarrow0\ \text{as}\ k\rightarrow\infty,
$$
for almost all $x\in\Omega$ and
\begin{align*}
 |f(x,u_{k}(x))- f(x,u(x))| & \leq |f(x,u_{k}(x))|+| f(x,u(x))|\\
                               & \leq \|w\|_{\infty} (m(|u_k(x)|)+m(|u(x)|)).
\end{align*}
Note that the majorant function in the previous relation is uniformly bounded in $L^{\tilde M}(\Omega)$. Hence, by applying the dominate convergence theorem we get that
$$
\int_\Omega \tilde M(|f(x,u_{k}(x))- f(x,u(x))|)\,dx \to 0 \quad \text{ as } k\to\infty.
$$
Since $M$ satisfies \eqref{cond}, $\tilde M-$mean convergence is equivalent to norm convergence (see \cite[Lemma 3.10.4]{FJK}), that is,
$$
\|f(\cdot,u_{k}(\cdot))-f(\cdot,u(\cdot))\|_{L^{\tilde M}(\Omega)}\rightarrow0 \quad  \text{ as } k\rightarrow\infty.
$$
Therefore $\| \mathcal{F}'(u_{k})- \mathcal{F}'(u)\|_{\mathcal{X}^{*}}\rightarrow0$ as  $k\rightarrow\infty$, giving that  $\mathcal{F}'$ is a compact operator.
\end{proof}

\begin{remark}
Combining Lemmas \ref{Phit} and \ref{Jt}, we deduce that $\Psi \in C^{1}(\mathcal{X},\mathbb{R})$ with the derivative given by
\begin{align*}
  \langle \Psi'(u),v\rangle&=\langle(-\Delta_g)^{s}u,v\rangle_*+\int_{\mathbb{R}^{n}\backslash\Omega}\beta\ g(|u(x)|)\frac{u}{|u|}v(x)\,dx+\int_{\Omega}g(|u(x)|)\frac{u}{|u|}v(x)\,dx\\
  &-\lambda\int_{\Omega}f(x,u(x))v(x)\,dx-\mu\int_{\Omega}g(x,u(x))v(x)\,dx,
  \end{align*}
   for every $v\in \mathcal{X}$. Then,  critical points of $\Psi$ are weak solutions of problem \eqref{eq}.
\end{remark}

Having proved these preliminaries, we are in position to prove our first main theorem.

\begin{proof}[{\bf Proof of Theorem \ref{Three solution}}]
Fix $\lambda,\mu$ and $f,h\in\mathcal{A}$, we check the conditions needed to apply Theorem \ref{Ricceri}.

Fixed $\varepsilon>0$, in light of \eqref{pf1} there exist intervals $I_1=[-r_2,-r_1]$ and $I_2=[r_1,r_2]$ such that
\begin{equation} \label{eqq1}
F(x,t)\leq \varepsilon G(|t|) \quad (x,t)\in \Omega \times \RR\setminus (I_1\cup I_2).
\end{equation}
In $I_1\cup I_2$, $F(x,\cdot)$ is bounded in $\Omega$, then there exist $d>0$ and a Young function $B$ such that $b=B'$, $G \prec\prec B \prec\prec G_*$ and $p^+<b^-$ (here $b^-$ denotes a constant such that $b^- <\frac{tb(t)}{B(t)}$)
for which
$$
F(x,t) \leq d  B(|t|) \quad (x,t)\in \Omega \times  (I_1\cup I_2).
$$
Then, from the inequalities above  we obtain that
\begin{align*}
\frac{\mathcal{F}(u)}{\mathcal{J}(u)}\leq  \frac{\int_\Omega F(x,u)\,dx}{ \int_\Omega G(|u|)\,dx} \leq  \varepsilon + d\frac{\int_\Omega B(|u|)\,dx}{ \int_\Omega G(|u|)\,dx}.
\end{align*}
Observe that, assuming that $\|u\|_\X\leq 1$, from Lemma \ref{ineq1} and \cite[Theorem 3.17.1]{FJK} it holds that
$$
\lim_{u\to 0} \frac{\int_\Omega B(|u|)\,dx}{ \int_\Omega G(|u|)\,dx}\leq \lim_{u\to 0} \frac{\|u\|_{L^B(\Omega)}^{b^-}}{\|u\|_{L^G(\Omega)}^{p^+}}\leq c \lim_{u\to 0}  \|u\|_{L^G(\Omega)}^{b^- -p^+}=0.
$$
From the previous computations it follows that
$$
J_1:=\limsup_{u\to 0} \frac{\mathcal{F}(u)}{\mathcal{J}(u)}\leq \varepsilon.
$$
Moreover, assuming that $\|u\|_{\X}\geq 1$, by using again \eqref{eqq1} and Lemma \ref{ineq1} we get
\begin{align*}
\frac{\mathcal{F}(u)}{\mathcal{J}(u)}&\leq \frac{\int_{\{x\in\Omega\colon|u(x)|\leq r_2\}} F(x,u)\,dx}{\mathcal{J}(u)} + \frac{\int_{\{x\in\Omega\colon|u(x)|> r_2\}} F(x,u)\,dx}{ \int_\Omega G(|u|)\,dx}\\
&\leq  \frac{|\Omega|}{\|u\|_\X^{p^+}} \sup\{ F(x,u(x))\colon  (x,u(x))\in \Omega\times [-r_2,r_2] \}  + \varepsilon
\end{align*}
from where we obtain that
$$
J_2:=\limsup_{\|u\|\to \infty} \frac{\mathcal{F}(u)}{\mathcal{J}(u)}\leq \varepsilon.
$$
Therefore, since $\varepsilon$ is arbitrary we obtain that $\max\{0,J_1,J_2\} =0$.

Finally, since we are assuming \eqref{pf2} it follows that the quantity $\sup\{ \mathcal{F}(u)/\mathcal{J}(u)\colon u \in \mathcal{J}^{-1}([0,\infty])\}$ is strictly positive.

Finally, gathering Lemma \ref{Phit}, Lemma \ref{Jt} and the last computations, we are in position of applying Theorem \ref{Ricceri} to obtain our  conclusion.
\end{proof}

Finally, we prove our second existence result.
\begin{proof}[Proof of Theorem \ref{teo2}]
Since $G\prec\prec G_*$, hypothesis $(i)$ implies \eqref{f1}.

Note that hypothesis $(i)$ also implies that
$$
\frac{\mathcal{F}(u)}{\mathcal{J}(u)}\leq \frac{\int_\Omega F(x,u)\,dx}{\int_\Omega G(|u|)\,dx} \leq c_1\frac{\int_\Omega(1+B(|u|)\,dx )}{\int_\Omega G(|u|)\,dx}.
$$
Assuming that $\|u\|_\X\geq 1$, from Lemma \ref{ineq1} and \cite[Theorem 3.17.1]{FJK} it holds that
$$
\lim_{\|u\|_\X \to \infty} \frac{\int_\Omega B(|u|)\,dx}{ \int_\Omega G(|u|)\,dx}\leq \lim_{u\to \infty} \frac{\|u\|_{L^B(\Omega)}^{b^+}}{\|u\|_{L^G(\Omega)}^{p^-}}\leq c \lim_{u\to \infty}  \|u\|_{L^G(\Omega)}^{b^+ -p^-}=0.
$$
From where
$$
J_1:=\limsup_{\|u\|_\X \to \infty} \frac{\mathcal{F}(u)}{\mathcal{J}(u)} = 0.
$$
Similarly, assuming that $\|u\|_\X \leq 1$, hypothesis $(ii)$ implies that
$$
J_2:=\limsup_{\|u\|_\X \to 0} \frac{\mathcal{F}(u)}{\mathcal{J}(u)}
\leq
c_2 \lim_{\|u\|_\X \to 0} \frac{\int_\Omega D(|u|)\,dx}{ \int_\Omega G(|u|)\,dx} \leq c   \lim_{\|u\|_\X \to 0}  \|u\|_{L^G(\Omega)}^{d^- -p^+}=0.
$$
From these relations it follows that $\max\{ 0 , J_1, J_2\} =0$.

Now, without loss of generality we   assume that $\tau_2>0$ and choose a function $u\in \X$ such that $u(x)\geq 0$ in $\Omega$ and such that there exists $x_0\in\Omega$ with $u(x_0)>\tau_2$. It follows that $\mathcal{U}:=\{ x\in\Omega\colon u(x)>\tau_2\}$ is a nonempty open subset of $\Omega$.

Let $k\colon \RR\to\RR$ defined by $k(t)=\min\{t,\tau_2\}$. Then $k(0)=0$ and $k$ is Lipschitz with Lipschitz constant $1$.
Therefore, the function $u_1 = k\circ u \in \X$ satisfies that $u_1(x)=t$ for every $x\in U$ and $0\leq u_1(x)\leq \tau_2$ for every $x\in\Omega$. Then, by hypothesis $(iii)$ we obtain that
$$
F(x,u_1(x))>0 \quad \text{ for any }x\in \mathcal{U}, \qquad F(x,u_1(x))\geq 0 \quad \text{ for every } x \in \Omega.
$$
From this we conclude that $\mathcal{F}(u_1)>0$ and thus
$$
\delta^{-1} = \sup\left\{ \frac{\mathcal{F}(u)}{\mathcal{J}(u)}\colon u\in \mathcal{J}^{-1}((0,\infty)) \right\}>0.
$$
Therefore, from Lemma \ref{Phit}, Lemma \ref{Jt} and the last computations, the result follows by applying Theorem \ref{Ricceri}.
\end{proof}

\section{Some examples of nonlinearities} \label{sec.ejemplos}

Let $G$ be a Young function satisfying \eqref{cond.intro}, \eqref{G2} and \eqref{G3}. Let us prove that the following examples of nonlinearities belong to the class $\mathcal{A}$ and satisfy the hypothesis of Theorem \ref{Three solution}.
\begin{itemize}
\item[(i)]
Consider the function
$f(t)= p|\sin t|^{p-2}\sin t \cos t$ with $p^+<p<p^+_*$ and observe that
$$
|f(t)|\leq p (1+|t|^{p_*^+ -1}),
$$
and since $F(t)=  |\sin t|^p$ we obtain
$$
\lim_{|t|\to 0} \frac{\sup_x F(t)}{G(t)} \leq \lim_{|t|\to 0} \frac{|\sin t|^p}{|t|^{p^+}}=0, \qquad
\lim_{|t|\to \infty} \frac{\sup_x F(t)}{G(t)} \leq \lim_{|t|\to \infty} \frac{|\sin t|^p}{|t|^{p^-}}=0.
$$
Finally, given a compact set $C\subset \Omega$ of positive measure, we consider a function $v\in \X$ such that $v(x)=\frac{\pi}{2}$ in $C$ and $0\leq v(x)\leq \frac{\pi}{2}$ in $\Omega\setminus C$. Then
$$
\sup_{u\in \X} \int_\Omega F(u)\,dx \geq  \int_\Omega |\sin v(x)|^p \,dx = |C| + \int_{\Omega\setminus C} |\sin v(x)|^p\,dx >0.
$$

\medskip

\item[(ii)] More generally, let $M$ be a Young function such that $p^-<p^+<m^-<m^+$, where
$$
1<m^- < \frac{t m(t)}{M(t)} < m^+ <\infty \quad \text{ for all } t\geq 0.
$$
Consider the function $f(t)= m(|\sin t|) \cos t$ for $t\geq 0$, and observe that this function fulfills that $
|f(t)| \leq \max\{m(1),1\} + m(|t|)$. Moreover,  taking $\tau = \sin r$, we get
$$
\int_0^t m(\sin r) \cos r \,dr = \int_0^t m(\tau)\,d\tau = M(|\sin t|)
$$
from where
$$
\lim_{|t|\to 0} \frac{\sup_x F(t)}{G(t)} \leq   \lim_{|t|\to 0} \frac{M(|\sin t|)}{G(|t|)}=0 \leq \lim_{|t|\to 0}   \frac{|\sin t|^{m^-}}{|t|^{p^+}}=0
$$
and
$$
\lim_{|t|\to \infty} \frac{\sup_x F(t)}{G(t)} \leq  \lim_{|t|\to \infty} \frac{M(|\sin t|)}{G(|t|)}=0 \leq \lim_{|t|\to \infty}  \frac{|\sin t|^{m^+}}{|t|^{p^-}}=0.
$$
As before, given a compact set $C\subset \Omega$ of positive measure, we consider a function $u\in \X$ such that $v(x)=\frac{\pi}{2}$ in $C$ and $0\leq v(x)\leq \frac{\pi}{2}$ in $\Omega\setminus C$. Then
$$
\sup_{u\in \X} \int_\Omega F(u)\,dx \geq  \int_\Omega G(|\sin v(x)|) \,dx = |C| + \int_{\Omega\setminus C} G(|\sin v(x)|)\,dx >0.
$$

\medskip

\item[(iii)] We consider the following concave-convex combination
$$
f(t)= t^{p-1}-t^{q-1} \quad \text{with } p^+<p<q<p^+_*:=\frac{np^+}{n-sp^+}.
$$
Note that for some positive constant $c=c(p^\pm)$ it holds that
$$
|f(t)|\leq c(1+|t|^{p-1}) \leq  c(1+|t|^{p^+_*-1}).
$$
Moreover,
$$
\lim_{|t|\to 0} \frac{\sup_x F(t)}{G(t)} \leq \lim_{|t|\to 0} \frac{\frac{|t|^p}{p} - \frac{|t|^q}{q}}{|t|^{p^+}}=0, \quad
\lim_{|t|\to \infty} \frac{\sup_x F(x,t)}{G(t)} \leq \lim_{|t|\to \infty} \frac{\frac{|t|^p}{p} - \frac{|t|^q}{q}}{|t|^{p^-}}=-\infty.
$$
Finally, let a compact set $C\subset \Omega$ large enough and $v\in \X$ such that $v(x)=\tau$ in $C$ and $0\leq v(x)\leq \tau$ in $\Omega\setminus C$, where $\tau$ is chosen such that $\frac{\tau^q}{q}-\frac{\tau^p}{p}>0$. Then
\begin{align*}
\sup_{u\in \X} \int_\Omega F(u)\,dx &\geq \int_\Omega F(v)\,dx =\frac1q \int_\Omega v^q\,dx - 	\frac1p \int_\Omega v^p\,dx  \\&
\geq \frac1q \int_C v^q\,dx - \frac1p \int_C v^p\,dx  - \frac1p \int_{\Omega\setminus C} v^p\,dx \\
&\geq |C|\left( \frac{\tau^q}{q} -\frac{\tau^p}{p} \right)- \frac{\tau^p}{p}|\Omega\setminus C|>0.
\end{align*}

\end{itemize}
\medskip

The following example satisfies the hypothesis of Theorem \ref{teo2}.
\begin{itemize}
\item[(iv)] Let $0<\alpha<p^- \leq p^+ < \beta$. Consider the function
\begin{align*}
f_1(t)=
\begin{cases}
|t|^{\alpha-2}t &\text{ if } |t|\leq 1\\
|t|^{\beta-2}t &\text{ if } |t|> 1.
\end{cases}
\end{align*}
Then, it easily follows that
\begin{align*}
F_1(t)=
\begin{cases}
\frac{|t|^\alpha}{\alpha} &\text{ if } |t|\leq 1\\
\frac{1}{\alpha}-\frac{1}{\beta} + \frac{1}{\beta} |t|^\beta &\text{ if } |t|>1,
\end{cases}
\end{align*}
and conditions $(i)$--$(iii)$ from Theorem \ref{teo2} are fulfilled.
\end{itemize}

\section{Eigenvalues and minimizers} \label{sec.5}
We start this section by defining the notion of eigenvalues.
\begin{definition}
We say that $\lambda$ is an \emph{eigenvalue} of \eqref{eq.d} with \emph{eigenfunction} $u\in W^{s,G}_0(\Omega)$ if
\begin{equation} \label{eig.d}
\langle (-\Delta_g)^s u,v\rangle  =(\lambda -1)\int_\Omega g(|u|) \frac{u}{|u|} v \,dx \qquad \forall v \in W^{s,G}_0(\Omega).
\end{equation}

We say that $\lambda$ is an \emph{eigenvalue} of \eqref{eq.n} with \emph{eigenfunction} $u\in W^{s,G}_*(\Omega)$ if
\begin{equation} \label{eig.n}
\langle (-\Delta_g)^s u,v\rangle_*   =(\lambda -1)\int_\Omega g(|u|)  \frac{u}{|u|} v \,dx \qquad \forall v \in W^{s,G}_*(\Omega).
\end{equation}

We say that $\lambda$ is an \emph{eigenvalue} of \eqref{eq.n2} with \emph{eigenfunction} $u\in W^{s,G}_{reg}(\Omega)$ if
\begin{equation} \label{eig.n2}
\langle (-\Delta_g)_\Omega^s u,v\rangle   =(\lambda -1) \int_\Omega g(|u|)  \frac{u}{|u|} v \,dx \qquad \forall v \in W^{s,G}_{reg}(\Omega).
\end{equation}

We say that $\lambda$ is an \emph{eigenvalue} of \eqref{eq.r} with \emph{eigenfunction} $u\in \X$ if
\begin{equation} \label{eig.r}
\langle (-\Delta_g)_\Omega^s u,v\rangle_*   =(\lambda -1) \int_\Omega g(|u|)  \frac{u}{|u|} v \,dx  - \int_{\mathbb{R}^n\setminus \Omega} \beta g(|u|)\frac{u}{|u|}v\, dx \qquad \forall v \in\X.
\end{equation}
\end{definition}

\bigskip
In order to prove our eigenvalues and minimizers results we will consider the following functionals defined in Sections \ref{sec.sobolev} and \ref{sec.partes}
$$
\Phi_{s,G,\RR^n}(u)\colon W^{s,G}_0(\Omega)\to \RR, \quad \Phi_{s,G,\Omega}(u)\colon W^{s,G}_{reg}(\Omega)\to \RR, \quad  \Phi_{s,G,*}(u)\colon W^{s,G}_*(\Omega)\to \RR
$$
and
$$
\Phi_G(u)\colon \mathcal{W}(\Omega)\to \RR
$$
where its definition domain $\mathcal{W}(\Omega)$ is either $W^{s,G}_0(\Omega)$, $W^{s,G}_{reg}(\Omega)$ or $W^{s,G}_*(\Omega)$.

Following \cite[Proposition 4.1]{Salort2} it is straightforward to see that these functionals are well-defined and are Fr\'echet derivable. Moreover, the following expressions can be deduced.

\begin{proposition}
We have that $(\Phi_{s,G,\RR^n})'$ is defined from $W^{s,G}_0(\Omega)$ onto its dual, $(\Phi_{s,G,\Omega})'$ from $ W^{s,G}_{reg}(\Omega)$ onto its dual, $(\Phi_{s,G,*})'$ from $W^{s,G}_*(\Omega)$ onto its dual, and $(\Phi_{G,\Omega})'$ from $\mathcal{W}(\Omega)$ onto its dual, are $C^1$ and their Fr\'echet derivatives are given by
\begin{align*}
&\langle (\Phi_{s,G,\RR^n})'(u),v \rangle = \langle (-\Delta_g)^su,v\rangle \qquad  \forall v \in W^{s,G}_0(\Omega),\\
&\langle (\Phi_{s,G,\Omega})'(u),v \rangle = \langle (-\Delta_g)^su,v\rangle \qquad  \forall v \in W^{s,G}_{reg}(\Omega),\\
&\langle (\Phi_{s,G,*})'(u),v \rangle = \langle (-\Delta_g)^su,v\rangle_* \qquad \forall v \in W^{s,G}_*(\Omega),\\
&\langle (\Phi_{G,\Omega})'(u),v \rangle = \int_\Omega g(|u|)\frac{u}{|u|}v\,dx \qquad \forall v \in \mathcal{W}(\Omega).
\end{align*}
\end{proposition}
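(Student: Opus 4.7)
The plan is to verify all four formulas by a unified argument: compute the Gateaux derivative via differentiation under the integral sign, upgrade to Fr\'echet differentiability using H\"older's inequality in Orlicz spaces, and finally establish continuity of the derivative to conclude that each functional is $C^1$. The proof for the four functionals is essentially the same, differing only in the measure ($dx$ versus $d\mu$) and the domain of integration, so I would write the argument in detail only for one, say $\Phi_{s,G,*}$, and indicate the trivial modifications for the others.

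First I would fix $u,v$ in the appropriate space and study, pointwise in $(x,y)$, the real function $t\mapsto G(|D_s(u+tv)(x,y)|)$. Convexity of $G$ and the chain rule away from the zero set of $D_s u$ give
\begin{equation*}
\frac{d}{dt}\Big|_{t=0} G(|D_s(u+tv)|) = g(|D_s u|)\,\frac{D_s u}{|D_s u|}\,D_s v,
\end{equation*}
while the set where $D_s u=0$ contributes zero to the integral (since $G(|D_s(u+tv)|)-G(0)=O(t)$ there by the Lipschitz property \eqref{L_3}). The key step is to produce an integrable dominating function for the difference quotient. Using the convexity of $G$ together with \eqref{cond.intro} (which yields $t g(t)\le p^+ G(t)$) and the doubling inequality \eqref{L2}, one obtains
\begin{equation*}
\left| \frac{G(|D_s(u+tv)|)-G(|D_s u|)}{t} \right| \le C\bigl(G(|D_s u|)+G(|D_s v|)\bigr)
\end{equation*}
for $|t|\le 1$, and the right-hand side is in $L^1(d\mu, \mathbb{R}^{2n}\setminus(\Omega^c)^2)$ by definition of the space. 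Dominated convergence then yields the Gateaux derivative with the claimed formula.

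To upgrade to Fr\'echet differentiability and continuity of the derivative, I would combine Lemma \ref{lemita.1}, which gives $\tilde G(g(t))\le (p^++1)G(t)$, with H\"older's inequality for complementary Young functions. This produces the estimate
\begin{equation*}
\left| \langle(-\Delta_g)^s u,v\rangle_*\right| \le 2\bigl\|g(|D_s u|)\bigr\|_{L^{\tilde G}(d\mu)} \, \|D_s v\|_{L^{G}(d\mu)},
\end{equation*}
showing that the linear map $v\mapsto \langle(-\Delta_g)^s u,v\rangle_*$ is bounded on $W^{s,G}_*(\Omega)$; a parallel computation applied to $u+h$ and subsequently sending $\|h\|\to 0$, invoking the mean convergence equivalence of norm convergence under \eqref{cond.intro} (\cite[Lemma 3.10.4]{FJK}), gives both the Fr\'echet property and the continuity $u\mapsto (\Phi_{s,G,*})'(u)$. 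The same scheme, replacing $d\mu$ on $\mathbb{R}^{2n}\setminus(\Omega^c)^2$ by $d\mu$ on $\mathbb{R}^n\times\mathbb{R}^n$ or on $\Omega\times\Omega$, handles $\Phi_{s,G,\mathbb{R}^n}$ and $\Phi_{s,G,\Omega}$; for $\Phi_{G,\Omega}$ one simply works with $u,v$ instead of $D_s u, D_s v$ and $dx$ instead of $d\mu$.

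The main technical obstacle is ensuring that the growth information encoded in \eqref{cond.intro} is sufficient to make every dominated-convergence step uniform on bounded sets; this is exactly where the two-sided bound $p^-\le tg(t)/G(t)\le p^+$ and Lemma \ref{lemita.1} are used repeatedly. Once these Orlicz-type estimates are in place, the identification of the derivatives with the operators $(-\Delta_g)^s$, $(-\Delta_g)^s_\Omega$, and the pointwise expressions $g(|u|)\tfrac{u}{|u|}$ follows directly from the representation formulas stated in the preliminaries.
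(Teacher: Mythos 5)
Your proposal is correct and, in substance, reproduces the standard argument: the paper itself gives no proof here but simply defers to \cite[Proposition 4.1]{Salort2}, where the derivative is computed exactly along your lines (Gateaux derivative by dominated convergence using the bound $\tilde G(g(t))\le (p^++1)G(t)$ and the $\Delta_2$ property, then H\"older's inequality for complementary Young functions and the equivalence of modular and norm convergence to get boundedness and continuity of $u\mapsto \Phi'(u)$, hence the $C^1$ property). The only place where your sketch compresses the real work is the continuity of the derivative, which requires passing to a.e.\ convergent subsequences and a dominated convergence argument in $L^{\tilde G}(d\mu)$ before invoking \cite[Lemma 3.10.4]{FJK}, but this is exactly the step carried out in the cited reference.
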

\begin{proof}
See \cite[Proposition 4.1]{Salort2} with the pertinent changes.
\end{proof}

Given $\mu>0$, we consider the minimization problems
\begin{equation} \label{m.d}
\Lambda_D :=\inf_{u\in M^D_\mu} \frac{\Phi_{s,G,\RR^n}(u) + \Phi_{G,\Omega}(u)}{\Phi_{G,\Omega}(u)} \quad \text{ with }\quad  M^D_\mu =\{ u \in W^{s,G}_0 \colon \Phi_{G,\Omega}(u)=\mu\},
\end{equation}

\begin{equation} \label{m.n}
\Lambda_N :=\inf_{u\in M^N_\mu} \frac{\Phi_{s,G,*}(u) + \Phi_{G,\Omega}(u)}{\Phi_{G,\Omega}(u)} \quad \text{ with }\quad  M^N_\mu =\{ u \in W^{s,G}_*(\Omega) \colon \Phi_{G,\Omega}(u)=\mu\},
\end{equation}

\begin{equation} \label{m.n2}
\Lambda_{\tilde N} :=\inf_{u\in M^{\tilde N}_\mu} \frac{\Phi_{s,G,\Omega}(u) + \Phi_{G,\Omega}(u)}{\Phi_{G,\Omega}(u)} \quad \text{ with }\quad  M^{\tilde N}_\mu =\{ u \in W^{s,G}_{reg}(\Omega)\colon \Phi_{G,\Omega}(u)=\mu\},
\end{equation}
and
\begin{equation} \label{m.r}
\Lambda_R :=\inf_{u\in M^R_\mu} \frac{\Phi_{s,G,*}(u) + \Phi_{G,\Omega}(u) + \Phi_{G, \beta,\Omega^c}(u)}{\Phi_{G,\Omega}(u)}
\end{equation}
with
$$
M^R_\mu =\{ u \in \X\colon  \Phi_{G,\Omega}(u)=\mu\} \quad \text{and} \quad  \Phi_{G,\beta, \Omega^c}(u):=\int_{\RR^n\setminus \Omega}\beta\ G(|u(x)|)\,dx.
$$
Note the subindex refers to Dirichlet, Neumann, regional Neumann and Robin, respectively. Moreover, due to the possible lack of homogeneity, in general the quantities defined above depend on the energy level $\mu$.

\begin{remark}
By using the Poincar\'e's inequality \cite[Proposition 3.2]{Salort2} it follows that $[\cdot]_{}$ is an equivalent norm in $W^{s,G}_0(\Omega)$.
\end{remark}

\begin{proposition} \label{exist.minim}
For each $\mu>0$ there exist solutions of the minimization problems \eqref{m.d}, \eqref{m.n}, \eqref{m.n2} and \eqref{m.r}, respectively.
\end{proposition}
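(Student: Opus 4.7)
The plan is to apply the direct method of the calculus of variations to each of the four minimization problems; the scheme is the same in all four cases, so I will describe it once and indicate the variations. Fix $\mu>0$ and pick a minimizing sequence $\{u_k\}_{k\in\NN}$ in the appropriate ambient space: $W^{s,G}_0(\Omega)$ for \eqref{m.d}, $W^{s,G}_*(\Omega)$ for \eqref{m.n}, $W^{s,G}_{reg}(\Omega)$ for \eqref{m.n2}, and $\X$ for \eqref{m.r}, with $\Phi_{G,\Omega}(u_k)=\mu$ in each case.

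First I would show that the minimizing sequence is bounded in the norm of the ambient space. The $L^G(\Omega)$ piece of the norm is controlled for free by the constraint $\Phi_{G,\Omega}(u_k)=\mu$ via Lemma \ref{ineq1}, which gives $\|u_k\|_{L^G(\Omega)}\leq \max\{\mu^{1/p^-},\mu^{1/p^+}\}$. Since $\{u_k\}$ is minimizing, the nonlocal piece of the numerator ($\Phi_{s,G,\RR^n}(u_k)$, $\Phi_{s,G,*}(u_k)$ or $\Phi_{s,G,\Omega}(u_k)$, according to the problem) is bounded, and Lemma \ref{ineq1} then bounds the corresponding Gagliardo seminorm. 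In the Robin case the extra piece $\Phi_{G,\beta,\Omega^c}(u_k)$ is likewise bounded, which through the same modular-to-norm conversion controls $\|u_k\|_{L^{G,\beta}(\Omega^c)}$.

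Next, by reflexivity of the fractional Orlicz--Sobolev spaces (cf. \cite[Proposition 2.11]{FBS} and \cite[Proposition 2.9]{DNFBS}) I would extract, up to a subsequence, a weak limit $u_k\rightharpoonup u_0$. The main obstacle is preserving the equality constraint $\Phi_{G,\Omega}(u_0)=\mu$ in the limit; indeed, without it the minimization would trivialize by letting $u_k\to 0$. I would overcome this by compactness: Theorem \ref{ceb}(ii) for \eqref{m.d}--\eqref{m.n2} and Lemma \ref{embedding} for \eqref{m.r} give a compact embedding of the ambient space into $L^G(\Omega)$, since \eqref{G3} ensures $G\prec\prec G_*$. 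Thus $u_k\to u_0$ strongly in $L^G(\Omega)$; because $G$ satisfies the $\Delta_2$ condition by \eqref{cond.intro}, norm convergence is equivalent to modular convergence (see \cite[Lemma 3.10.4]{FJK}), so $\Phi_{G,\Omega}(u_0)=\mu$ and $u_0$ is admissible. For \eqref{m.d} one uses that $W^{s,G}_0(\Omega)\subset W^{s,G}_{reg}(\Omega)$ continuously to invoke Theorem \ref{ceb}.

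Finally I would close by invoking weak lower semicontinuity of each convex modular $\Phi_{s,G,\RR^n}$, $\Phi_{s,G,*}$, $\Phi_{s,G,\Omega}$ and $\Phi_{G,\beta,\Omega^c}$, which follows from Fatou's lemma applied to the $s$-H\"older quotients together with the convexity of $G$, exactly as in the proof of Lemma \ref{Phit}(ii) via \cite[Lemma 19]{sabri2}. Combining this with $\Phi_{G,\Omega}(u_0)=\mu$, the numerator of the quotient evaluated at $u_0$ is bounded by $\mu\Lambda$, so $u_0$ attains the infimum. Of the four steps, only the constraint-preservation step is nontrivial; once the compact embedding into $L^G(\Omega)$ is in hand, the rest is routine bookkeeping.
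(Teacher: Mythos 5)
Your proposal is correct and follows essentially the same route as the paper, which simply invokes the direct method of the calculus of variations (citing \cite[Proposition 5.1]{Salort2}): bound the minimizing sequence via Lemma \ref{ineq1}, extract a weak limit by reflexivity, preserve the constraint $\Phi_{G,\Omega}(u_0)=\mu$ through the compact embedding into $L^G(\Omega)$ (using $G\prec\prec G_*$ and the $\Delta_2$ equivalence of norm and modular convergence), and conclude by weak lower semicontinuity of the convex modulars. Your explicit treatment of the constraint-preservation step is exactly the nontrivial point the paper leaves to the cited reference, so nothing is missing.
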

\begin{proof}
It follows just by applying the direct method of the calculus of variations. See \cite[Proposition 5.1]{Salort2}.
\end{proof}

Existence of minimizers allow us to prove existence of eigenvalues.
\begin{theorem} \label{teo1}
For every $\mu>0$ there exist positive numbers $\lambda_D$, $\lambda_N$, $\lambda_{\tilde N}$ and $\lambda_{R}$ which are eigenvalues of \eqref{eq.d}, \eqref{eq.n}, \eqref{eq.n2} and \eqref{eq.r}, respectively, with non-negative eigenfunctions $u_D\in W^{s,G}_0(\Omega)$, $u_{N}\in W^{s,G}_*(\Omega)$, $u_{\tilde N}\in W^{s,G}_{reg}(\Omega)$ and $u_R\in \X$, respectively,  normalized such that $\Phi_{G,\Omega}(u_D)=\Phi_{G,\Omega}(u_N)=\Phi_{G,\Omega}(u_{\tilde N})=\Phi_{G,\Omega}(u_R)=\mu$.
\end{theorem}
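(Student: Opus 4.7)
The plan is to treat the four cases in parallel through a Lagrange multiplier argument applied to the minimizers already produced by Proposition \ref{exist.minim}, invoking the Fr\'echet differentiability of the modulars recorded in the proposition stated just above.

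\textbf{Step 1: reduction to non-negative minimizers.} Let $u$ be any minimizer associated to one of \eqref{m.d}--\eqref{m.r}. Since $G$ is even and nondecreasing on $[0,\infty)$, the pointwise inequality $\bigl||u(x)|-|u(y)|\bigr|\le |u(x)-u(y)|$ yields $\Phi_{s,G,\mathbb{R}^n}(|u|)\le \Phi_{s,G,\mathbb{R}^n}(u)$, $\Phi_{s,G,\Omega}(|u|)\le \Phi_{s,G,\Omega}(u)$ and $\Phi_{s,G,*}(|u|)\le \Phi_{s,G,*}(u)$, while trivially $\Phi_{G,\Omega}(|u|)=\mu$ and $\Phi_{G,\beta,\Omega^c}(|u|)=\Phi_{G,\beta,\Omega^c}(u)$. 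Since $|u|$ remains in the corresponding admissible space (in particular $|u|=0$ a.e.\ in $\mathbb{R}^n\setminus\Omega$ whenever $u$ does), we may replace $u$ by $|u|$ and assume all four minimizers are non-negative.

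\textbf{Step 2: Lagrange multiplier.} Fix one of the four problems; write the numerator of the Rayleigh-type quotient as $J(u)$ (so $J=\Phi_{s,G,\mathbb{R}^n}+\Phi_{G,\Omega}$ in the Dirichlet case, and analogously in the other three settings) and the constraint functional as $K(u)=\Phi_{G,\Omega}(u)-\mu$. Both are $C^1$ on the ambient space by the proposition preceding the theorem. The left-hand bound $p^-G(t)\le tg(t)$ in \eqref{cond.intro} gives
\[
\langle K'(u),u\rangle = \int_\Omega g(|u|)|u|\,dx \ge p^-\Phi_{G,\Omega}(u)=p^-\mu>0,
\]
so $K'(u)\neq 0$ and $\{K=0\}$ is a $C^1$ submanifold near the minimizer. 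The standard Lagrange multiplier rule in Banach spaces then supplies $\lambda\in\mathbb{R}$ with $J'(u)=\lambda K'(u)$.

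\textbf{Step 3: matching the eigenvalue equations and positivity.} Substituting the explicit Fr\'echet derivatives computed above, the identity $J'(u)=\lambda K'(u)$ reduces, in each of the four cases, to exactly \eqref{eig.d}, \eqref{eig.n}, \eqref{eig.n2}, or \eqref{eig.r}, which identifies $\lambda$ as an eigenvalue of the corresponding problem. Testing against $u$ and using once more that $g(t)t\ge p^-G(t)\ge 0$ (so that the fractional contribution is non-negative and the denominator is strictly positive) yields $\lambda\ge 1>0$, which finishes the proof.

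\textbf{Main obstacle.} The subtle point is the application of the multiplier rule: because the modulars $\Phi_{s,G,\cdot}$ are not $p$-homogeneous, the multiplier $\lambda$ does \emph{not} coincide \emph{a priori} with the infimum $\Lambda$ in \eqref{m.d}--\eqref{m.r}, and in general the two are only comparable (this is precisely the content of Proposition \ref{relacion3}). Hence one cannot derive the eigenvalue identity by merely scaling out the quotient; the differentiability together with the non-degeneracy $\langle K'(u),u\rangle>0$ established in Step 2, both consequences of the Lavrentiev-type condition \eqref{cond.intro}, are essential.
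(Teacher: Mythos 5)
Your proposal is correct and takes essentially the same route as the paper: starting from the minimizers provided by Proposition \ref{exist.minim}, one invokes the Lagrange multiplier rule, using the $C^1$ Fr\'echet differentiability of the modulars, to arrive at the weak formulations \eqref{eig.d}, \eqref{eig.n}, \eqref{eig.n2} and \eqref{eig.r}. In fact your write-up supplies details the paper leaves implicit — the reduction to non-negative minimizers by passing to $|u|$, the non-degeneracy of the constraint via $\langle K'(u),u\rangle\ge p^-\mu>0$, and the positivity $\lambda\ge 1$ obtained by testing with $u$ (where in the Robin case one also uses $\beta>0$) — so it is, if anything, a more complete version of the same argument.
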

\begin{proof}
Given a fixed $\mu>0$, in light of Proposition \ref{exist.minim} there exist functions $u_D\in W^{s,G}_0(\Omega)$, $u_{N}\in W^{s,G}_*(\RR^n)$, $u_{\tilde N}\in W^{s,G}(\Omega)$ and $u_R\in \X$,  respectively,  normalized such that their modular $\Phi_{G,\Omega}$ is equal to $\mu$, which attain the minimization problems \eqref{m.d}, \eqref{m.n}, \eqref{m.n2} and \eqref{m.r}, respectively.

Therefore, by the Lagrange multipliers rule, since the involved functionals are $C^1$, there exist numbers $\lambda_D$, $\lambda_N$, $\lambda_{\tilde N}$ and $\lambda_R$ for which the corresponding function $u_D$, $u_N$, $u_{\tilde N}$ and $u_R$ satisfy the weak formulations \eqref{eig.d}, \eqref{eig.n}, \eqref{eig.n2} and \eqref{eig.r}, respectively.
\end{proof}

\begin{proposition} \label{relacion}
The following relations among the minimizers of \eqref{m.d}, \eqref{m.n} and \eqref{m.n2} holds
$$
\Lambda_{\tilde N} \leq \Lambda_N \leq \Lambda_R \leq \Lambda_D.
$$
\end{proposition}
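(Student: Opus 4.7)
The strategy is to verify each of the three inequalities separately, exploiting the following two basic nestings. First, the integration domains satisfy
\[
\Omega\times\Omega \;\subset\; \RR^{2n}\setminus(\Omega^c)^2 \;\subset\; \RR^n\times\RR^n,
\]
so that for every measurable $u$,
\[
\Phi_{s,G,\Omega}(u)\;\leq\;\Phi_{s,G,*}(u)\;\leq\;\Phi_{s,G,\RR^n}(u).
\]
Second, the function spaces are nested in the appropriate direction: $W^{s,G}_0(\Omega)\subset \X$ (functions in $W^{s,G}_0$ vanish outside $\Omega$), and any $u\in\X$ satisfies $\Phi_{s,G,*}(u)<\infty$ and thus can be regarded as an element of $W^{s,G}_*(\Omega)$; similarly any $u\in W^{s,G}_*(\Omega)$ belongs to $W^{s,G}_{reg}(\Omega)$ via the first display.

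For $\Lambda_R\leq\Lambda_D$, I would take an arbitrary competitor $u\in M^D_\mu$. Since $u\equiv 0$ on $\RR^n\setminus\Omega$, we have $D_s u(x,y)=0$ for $(x,y)\in(\Omega^c)^2$, hence $\Phi_{s,G,\RR^n}(u)=\Phi_{s,G,*}(u)$. Also $\Phi_{G,\beta,\Omega^c}(u)=0$. Since $u\in\X$ and $\Phi_{G,\Omega}(u)=\mu$, $u$ is admissible in $M^R_\mu$, and the Rayleigh quotient defining $\Lambda_R$ at $u$ equals the one defining $\Lambda_D$ at $u$. Taking infimum over $M^D_\mu$ yields the inequality.

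For $\Lambda_N\leq\Lambda_R$, pick any $u\in M^R_\mu$. Then $u\in W^{s,G}_*(\Omega)$ and $\Phi_{G,\Omega}(u)=\mu$, so $u\in M^N_\mu$. Since $\Phi_{G,\beta,\Omega^c}(u)\geq 0$, the quotient appearing in \eqref{m.n} at $u$ is bounded above by the one in \eqref{m.r} at $u$; infimum over $M^R_\mu$ then gives the claim. For $\Lambda_{\tilde N}\leq\Lambda_N$, pick $u\in M^N_\mu$, so that $u\in W^{s,G}_{reg}(\Omega)$ with the same $\Phi_{G,\Omega}(u)=\mu$. The inequality $\Phi_{s,G,\Omega}(u)\leq\Phi_{s,G,*}(u)$ makes the quotient in \eqref{m.n2} at $u$ no larger than the one in \eqref{m.n} at $u$, and taking infimum over $M^N_\mu$ concludes.

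The argument is thus just a sequence of monotonicity arguments based on inclusions of domains and spaces; the only point requiring care is the identification $\Phi_{s,G,\RR^n}(u)=\Phi_{s,G,*}(u)$ for $u\in W^{s,G}_0(\Omega)$, and the verification that the normalization $\Phi_{G,\Omega}(u)=\mu$ is preserved under passing from one admissible class to a larger one (which is automatic since $\Phi_{G,\Omega}$ depends only on $u|_\Omega$). No deeper analytic step is needed.
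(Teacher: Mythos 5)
Your proposal is correct and follows essentially the same route as the paper: comparing the Rayleigh quotients pointwise via the nesting $\Phi_{s,G,\Omega}\leq\Phi_{s,G,*}\leq\Phi_{s,G,\RR^n}$, the inclusions $W^{s,G}_0(\Omega)\subset\X\subset W^{s,G}_*(\Omega)\subset W^{s,G}_{reg}(\Omega)$, and monotonicity of the infimum under enlarging the admissible set. Your explicit remarks that $\Phi_{G,\beta,\Omega^c}(u)=0$ and $\Phi_{s,G,\RR^n}(u)=\Phi_{s,G,*}(u)$ for Dirichlet competitors, and that the constraint $\Phi_{G,\Omega}(u)=\mu$ is preserved, only make the argument slightly more detailed than the paper's.
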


\begin{proof}
Observe that since $\Phi_{s,G,\Omega}(u)\leq \Phi_{s,G,*}(u)$ it follows that $W^{s,G}_*(\Omega)\subset W^{s,G}_{reg}(\Omega)$. Then, given a minimizer $u\in M_\mu^N$ of $\Lambda_N$ we get that
$$
\min_{u\in W^{s,G}_*(\Omega)} \frac{1}{\mu}\left( \mu +\Phi_{s,G,\Omega}(u) \right)\leq \min_{u\in W_{s,G,*}(\Omega)} \frac{1}{\mu} \left( \mu +\Phi^{s,G}_*(u), \right) = \Lambda_N
$$
but  since minimizing over a small set enlarges the minimum, we conclude that
$$
\Lambda_{\tilde N}=\min_{u\in W^{s,G}_{reg}(\Omega)} \frac{1}{\mu}\left( \mu +\Phi_{s,G,\Omega}(u) \right) \leq \min_{u\in W^{s,G}_*(\Omega)} \frac{1}{\mu}\left( \mu +\Phi_{s,G,\Omega} (u) \right) \leq \Lambda_N.
$$

Moreover,  since $\X \subset W^{s,G}_*(\Omega)$ it follows that $\Lambda_N \leq \Lambda_R$.

Finally, note that $u\in W^{s,G}_0(\Omega)$ if $\Phi_{s,G,*}(u)<\infty$ and $u=0$ in $\RR^n\setminus \Omega$. Therefore, $W^{s,G}_0(\Omega)\subset \X$ and $\Phi_{s,G,\RR^n}(u)=\Phi_{s,G,*}(u)$. Then, proceeding as before, $\Lambda_R\leq \Lambda_D$.
\end{proof}

The following proposition claims that minimizers are uniformly bounded away from zero independently of the energy level.
\begin{proposition} \label{relacion2}
Given $\mu>0$, the minimizers $\Lambda_{\tilde N}$, $\Lambda_N$, $\Lambda_R$ and $\Lambda_D$ are positive and bounded by below independently on $\mu$.
\end{proposition}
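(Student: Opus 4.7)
The plan is to exploit the fact that each of the four Rayleigh-type quotients defining $\Lambda_D$, $\Lambda_N$, $\Lambda_{\tilde N}$ and $\Lambda_R$ carries $\Phi_{G,\Omega}(u)=\mu$ in the denominator while also containing a copy of $\Phi_{G,\Omega}(u)$ as one of the summands of the numerator. Splitting each quotient accordingly will rewrite it as $1+(\text{non-negative modular})/\mu$, from which the conclusion will be immediate.

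Concretely, for the regional Neumann case I would note that
\[
\frac{\Phi_{s,G,\Omega}(u)+\Phi_{G,\Omega}(u)}{\Phi_{G,\Omega}(u)}=1+\frac{\Phi_{s,G,\Omega}(u)}{\mu},
\]
and the analogous rewriting will apply with $\Phi_{s,G,\RR^n}(u)$, $\Phi_{s,G,*}(u)$, and $\Phi_{s,G,*}(u)+\Phi_{G,\beta,\Omega^c}(u)$ in place of $\Phi_{s,G,\Omega}(u)$ for the Dirichlet, Neumann and Robin cases, respectively. Since $G\geq 0$ as a Young function and $\beta>0$ on $\RR^n\setminus\Omega$, every modular appearing on the right-hand side is non-negative. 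Hence, for any admissible $u$, the quotient is bounded below by $1$; taking the infimum over the corresponding admissible class yields
\[
\Lambda_{\tilde N},\ \Lambda_N,\ \Lambda_R,\ \Lambda_D\ \geq\ 1.
\]
This lower bound is strictly positive and visibly independent of $\mu$, which is exactly what the proposition asserts.

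I do not anticipate any real obstacle in executing this plan: the whole statement reduces to the algebraic structure of the four quotients together with the non-negativity of the modulars and of $\beta$. The only tacit ingredient is that the admissible sets $M^D_\mu,M^N_\mu,M^{\tilde N}_\mu,M^R_\mu$ are non-empty for every $\mu>0$, which has already been established in Proposition \ref{exist.minim}. If one wished to strengthen the lower bound so as to capture the geometry of $\Omega$ through a Poincaré-type inequality in the fractional Orlicz-Sobolev setting (as alluded to in the Dirichlet remark preceding the proposition), further work would be needed, but for the independence-of-$\mu$ positivity claimed here nothing beyond the above splitting is required.
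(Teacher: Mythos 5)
Your proof is correct, and it is genuinely different from (and more elementary than) the paper's argument. You simply observe that on each constraint set the denominator equals $\mu>0$ while the numerator is that same $\Phi_{G,\Omega}(u)=\mu$ plus non-negative modulars (using $G\geq 0$ and $\beta\geq 0$), so every quotient in \eqref{m.d}--\eqref{m.r} is $\geq 1$ and hence $\Lambda_D,\Lambda_N,\Lambda_{\tilde N},\Lambda_R\geq 1$ uniformly in $\mu$ (and even in $\Omega$). The paper instead cites \cite{Salort2} for the Dirichlet case and, for $\Lambda_{\tilde N}$, runs a Poincar\'e--Wirtinger-type argument: it subtracts the average $\bar u$, applies the $\Delta_2$ condition, Jensen's inequality and the bound $|x-y|\leq \mathrm{diam}(\Omega)$ to get $\Phi_{G,\Omega}(u)\leq c(\mathbf{C},|\Omega|)\bigl(\Phi_{s,G,\Omega}(u)+\Phi_{G,\Omega}(u)\bigr)$, and then transfers the resulting bound $\Lambda_{\tilde N}\geq 1/c(|\Omega|)$ to the other three quantities via the ordering of Proposition \ref{relacion}. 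What the paper's route buys is a technique that would still work if the zero-order term $\Phi_{G,\Omega}(u)$ were absent from the numerators (a genuine Poincar\'e-type control of the norm by the Gagliardo modular); what your route buys is brevity, independence from Proposition \ref{relacion} and from the geometry of $\Omega$, and in fact a sharper constant, since $1\geq 1/c(|\Omega|)$ for the constant produced by the paper (note $\mathbf{C}=2^{p^+}>1$). Your only tacit ingredient, non-emptiness of the constraint sets, is indeed covered by Proposition \ref{exist.minim}, so the argument is complete as stated.
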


\begin{proof}
The Dirichlet case is treated in \cite[Theorem 4.2]{Salort2}. We deal here with the general case.

Given $\mu>0$, let $u\in M_\mu^{\tilde N}$ be a minimizer of $\Lambda_{\tilde N}$, that is, $u\in W^{s,G}_{reg}(\Omega)$ is such that $\Phi_{G,\Omega}(u)=\mu$ and
$$
\Lambda_{\tilde N}= \frac{\Phi_{s,G,\Omega}(u) + \Phi_{G,\Omega}(u)}{\Phi_{G,\Omega}(u)}.
$$
Denote by $\bar u= \frac{1}{|\Omega|}\int_\Omega u(x)\,dx$ the average of $u$ on $\Omega$.  By using the $\Delta_2$ condition we have that
$$
\int_\Omega G(|u|)\,dx \leq \mathbf{C}  \int_\Omega G(|u-\bar u|)\,dx  + \mathbf{C}  \int_\Omega G(|\bar u|)\,dx.
$$
By using Jensen's inequality and \eqref{L1} we get
\begin{align*}
\int_\Omega G(|u-\bar u|)\,dx &= \int_\Omega G\left(\left| \frac{1}{|\Omega|}\int_\Omega (u(x)-u(y)) \,dy \right| \right) \,dx\\
&\leq
\frac{1}{|\Omega|} \int_\Omega  \int_\Omega G(|u(x)-u(y)|) \,dy  \,dx\\
&\leq
\frac{1}{|\Omega|} \int_\Omega  \int_\Omega G\left(\frac{|u(x)-u(y)|}{|x-y|^s} \,\text{diam}\,(\Omega)^s \right) \,dy  \,dx\\
&\leq c(|\Omega|) \Phi_{s,G,\Omega}(u).
\end{align*}
Finally, since again the Jensen's inequality gives
$$
\int_\Omega G(|\bar u|)\leq \int_\Omega G\left( \frac{1}{|\Omega|}\int_\Omega |u(y)|\,dy\right)\,dx \leq \frac{1}{|\Omega|} \int_\Omega \int_\Omega G(|u(y)|)\,dy \,dx =\Phi_{G,\Omega}(u)
$$
we obtain that $\Phi_{G,\Omega}(u) \leq c(\mathbf{C},|\Omega|)(\Phi_{s,G,\Omega}(u) + \Phi_{G,\Omega}(u))$, which implies a lower bound for $\Lambda_{\tilde N}$:
$$
\Lambda_{\tilde N} = \frac{ \Phi_{s,G,\Omega}(u) + \Phi_{G,\Omega}(u)}{\Phi_{G,\Omega}(u)} \geq \frac{1}{c(|\Omega|)}.
$$
In view of Proposition \ref{relacion}, the same lower bound is admissible for $\Lambda_N$, $\Lambda_R$ and $\Lambda_D$.
\end{proof}

The following proposition states that, although eigenvalues and minimizers differ in general, both quantities are indeed  comparable.
\begin{proposition} \label{relacion3}
It holds that
$$
\frac{p^-}{p^+} \Lambda \leq \lambda \leq \frac{p^+}{p^-} \Lambda
$$
where $\Lambda\in \{\Lambda_D,\Lambda_R,\Lambda_N,\lambda_{\tilde N}\}$ and $\Lambda\in \{\lambda_D,\lambda_R,\lambda_N,\lambda_{\tilde N}\}$, respectively.

As a direct consequence, denoting $c=p^+/p^-$, we have
$$
\lambda_{\tilde N}\leq c^2\lambda_N  \leq c^4 \lambda_R\leq c^6 \lambda_D.
$$
\end{proposition}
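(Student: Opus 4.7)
The plan is to exploit Theorem \ref{teo1}, which guarantees that a function attaining $\Lambda$ is simultaneously an eigenfunction with eigenvalue $\lambda$ (via the Lagrange multiplier rule applied to the $C^1$ functionals involved). I will treat the Robin case in detail; the Dirichlet, Neumann and regional Neumann cases are entirely analogous after the obvious replacements of the modulars and function spaces. Let $u\in \X$ realize $\Lambda_R$ under the normalization $\Phi_{G,\Omega}(u)=\mu$; by Theorem \ref{teo1}, $u$ also solves the weak formulation \eqref{eig.r} with eigenvalue $\lambda_R$.

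The first step of the argument would be to test \eqref{eig.r} against $v=u$ and solve for $\lambda_R$, which (after moving the $-1$ to the right) yields the Rayleigh-type identity
\[
\lambda_R = \frac{\displaystyle\int_{\mathbb{R}^{2n}\setminus(\Omega^c)^2} g(|D_s u|)|D_s u|\,d\mu + \int_\Omega g(|u|)|u|\,dx + \int_{\mathbb{R}^n\setminus\Omega}\beta\, g(|u|)|u|\,dx}{\displaystyle\int_\Omega g(|u|)|u|\,dx}.
\]
Next, I would invoke the growth hypothesis \eqref{cond.intro}, which is equivalent to the pointwise bound $p^- G(t)\leq tg(t)\leq p^+ G(t)$, and apply it termwise to each integrand appearing above. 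Writing $I(u):=\Phi_{s,G,*}(u)+\Phi_{G,\Omega}(u)+\Phi_{G,\beta,\Omega^c}(u)$ and $J(u):=\Phi_{G,\Omega}(u)$, so that $\Lambda_R = I(u)/J(u)$, and denoting by $N(u),D(u)$ the numerator and denominator of the Rayleigh expression for $\lambda_R$, this produces the sandwiches $p^-I(u)\leq N(u)\leq p^+ I(u)$ and $p^- J(u)\leq D(u)\leq p^+ J(u)$. Dividing delivers the two-sided comparison $\frac{p^-}{p^+}\Lambda_R\leq \lambda_R\leq \frac{p^+}{p^-}\Lambda_R$, and the same argument with the appropriate energy $I$ and constraint $J$ handles the other three cases.

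For the chain on the eigenvalues I would iterate the two-sided bound using Proposition \ref{relacion}. For the first link, combining the upper bound $\lambda_{\tilde N}\leq c\,\Lambda_{\tilde N}$ just proved, the monotonicity $\Lambda_{\tilde N}\leq \Lambda_N$ of Proposition \ref{relacion}, and the rearranged lower bound $\Lambda_N\leq c\,\lambda_N$ yields $\lambda_{\tilde N}\leq c^2\lambda_N$. The same three-step argument on the pairs $(\Lambda_N,\Lambda_R)$ and $(\Lambda_R,\Lambda_D)$ then produces the full chain $\lambda_{\tilde N}\leq c^2\lambda_N\leq c^4\lambda_R\leq c^6\lambda_D$. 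The principal delicate point will be the first step of the Robin and Neumann analysis: verifying that testing \eqref{eig.r} with $v=u$ reproduces exactly the displayed Rayleigh expression, with no stray constants left behind in the pairing $\langle\cdot,\cdot\rangle_*$ or in the Lagrange multiplier identification from Theorem \ref{teo1}; once this identity is in hand, the remainder of the proof is a purely algebraic termwise comparison driven by \eqref{cond.intro}.
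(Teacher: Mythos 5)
Your proposal is correct and follows essentially the same route as the paper: test the weak eigenvalue formulation with the minimizer/eigenfunction itself, use \eqref{cond.intro} in the form $p^-G(t)\le tg(t)\le p^+G(t)$ termwise to sandwich the resulting Rayleigh-type quotient between $\frac{p^-}{p^+}\Lambda$ and $\frac{p^+}{p^-}\Lambda$, and then interleave these two-sided bounds with the ordering of Proposition \ref{relacion} to get the chain with powers of $c=p^+/p^-$. The only caveat (which you already flag, and which the paper itself glosses over) is the normalization convention in $\langle\cdot,\cdot\rangle_*$ versus $\Phi_{s,G,*}$, a bookkeeping point that does not affect the structure of the argument.
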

\begin{proof}
These first chain of inequalities just follow by testing in the definition of eigenvalue with the eigenfunction itself and using the fact that condition \eqref{cond.intro}, for all $t\geq 0$,  relates $tg(t)$ with $G(t)$ up to the constants $p^\pm$.

The second chain of inequalities are obtained just gathering the first one together with Proposition \ref{relacion}.
\end{proof}

As a consequence of Proposition \ref{relacion2} and \ref{relacion3} we obtain a lower bound for eigenvalues.
\begin{theorem}
 $\lambda_D$, $\lambda_R$, $\lambda_N$, $\lambda_{\tilde N}$ are bounded by below by a positive constant independent on $\mu$.
\end{theorem}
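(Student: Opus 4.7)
The plan is to simply chain together the two preceding propositions. By Proposition \ref{relacion2}, each of the four minimizers $\Lambda_D$, $\Lambda_R$, $\Lambda_N$, $\Lambda_{\tilde N}$ is bounded below by a positive constant $c_0 = c_0(|\Omega|, \mathbf{C}) > 0$ which does not depend on the normalization level $\mu$. By Proposition \ref{relacion3}, each of the corresponding eigenvalues $\lambda \in \{\lambda_D, \lambda_R, \lambda_N, \lambda_{\tilde N}\}$ satisfies
\[
\lambda \geq \frac{p^-}{p^+}\,\Lambda,
\]
where $\Lambda$ is the matching minimizer.

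Combining the two estimates, I would conclude immediately that
\[
\lambda \geq \frac{p^-}{p^+}\,c_0 > 0
\]
for each of the four eigenvalues, with the constant on the right-hand side depending only on $p^\pm$, $\mathbf{C}$ and $|\Omega|$ and independent of $\mu > 0$. Since both input results have already been established earlier in the section, there is no real obstacle here: the proof is essentially a one-line combination, and the main point of the statement is to record that the uniform (in $\mu$) lower bound for the variational minimizers transfers to the eigenvalues through the non-homogeneous ratio $p^-/p^+$ coming from hypothesis \eqref{cond.intro}.
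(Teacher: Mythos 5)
Your proposal is correct and is exactly the paper's argument: the theorem is stated there as an immediate consequence of Proposition \ref{relacion2} (uniform positive lower bound for the minimizers $\Lambda$) and Proposition \ref{relacion3} (the comparison $\lambda \geq \frac{p^-}{p^+}\Lambda$), which is precisely the one-line combination you give.
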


\appendix
\section{An abstract existence result}

\begin{definition} \label{wx}
We introduce the following definitions.
  \begin{itemize}
    \item[(i)] If $X$ is a real Banach space, we denote by $\mathcal{W}_{X}$ the class of all functionals $\mathcal{J}\colon X\rightarrow \mathbb{R} $ possessing the following property: if $\{u_{k}\}_{k\in\NN}$ is a sequence in $X$ converging weakly to $u\in X$ and $\liminf_{k\rightarrow\infty}\mathcal{J}(u_{k})\leq \mathcal{J}(u)$, then  $\{u_{k}\}_{k\in\NN}$ has a subsequence converging strongly to $u$.
    \item[(ii)] We say that the derivative of $\mathcal{J}$ admits a continuous inverse on $X^{*}$ we mean that there exists a continuous operator $T\colon X^{*}\rightarrow X$ such that $T(\mathcal{J}(x))=x$ for all $x\in X$.
  \end{itemize}
\end{definition}
The above property is somehow a compactness property, stating the existence of a convergent subsequence of a given sequence.

\begin{theorem}[\cite{Ricceri}]\label{Ricceri}
Let $X$ be a separable and reflexive real Banach space; $\mathcal{J}\colon X\to \mathbb{R} $ a coercive, sequentially weakly lower semicontinuous $C^{1}$ functional, belonging to $\mathcal{W}_{X}$, bounded on each bounded subset of $X$ and whose derivative admits a continuous inverse on $X^{*}$, and $\mathcal{F}\colon X\to \mathbb{R} $ a  $C^{1}$ functional with compact derivative. Assume that $\Psi$ has a strict local minimum $x_{0}$ with $\mathcal{J}(x_{0})=\mathcal{F}(x_{0})=0$. Finally, setting
$$
\alpha=\max \left\{ 0,\ \limsup_{\|x\|\rightarrow+\infty}\frac{\mathcal{F}(x)}{\mathcal{J}(x)},\ \limsup_{\|x\|\rightarrow x_{0}}\frac{\mathcal{F}(x)}{\mathcal{J}(x)}\right\},
$$
$$
\beta=\sup_{x\in \mathcal{J}^{-1}(]0,+\infty[)}\frac{\mathcal{F}(x)}{\mathcal{J}(x)},
$$
and assume $\alpha<\beta$.
Then, for each compact interval $[a,b]\in (\frac{1}{\beta},\frac{1}{\alpha})$ (with the conventions $\frac{1}{0}=+\infty$, $\frac{1}{+\infty}=0$) there exists $\nu > 0$ with the following property: for every $\lambda\in [a,b]$ and every $C^{1}$ functional  $\mathcal{J}\colon X\rightarrow \mathbb{R} $ with compact derivative, there exists $\gamma>0$ such that, for each $\mu\in[0,\gamma]$, the equation
$$
\mathcal{J}'(x)=\lambda \mathcal{F}'(x)+\mu \mathcal{H}'(x)
$$
has at least three solutions whose norms are less than $\nu$.
\end{theorem}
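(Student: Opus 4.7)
The plan is to chain together the two previously established propositions in a direct and essentially mechanical way. Fix one of the four boundary condition types and let $\lambda \in \{\lambda_D,\lambda_R,\lambda_N,\lambda_{\tilde N}\}$ together with its associated minimizer $\Lambda \in \{\Lambda_D,\Lambda_R,\Lambda_N,\Lambda_{\tilde N}\}$. First I would invoke Proposition \ref{relacion2}, which asserts the existence of a constant $c_0 = c_0(|\Omega|,\mathbf{C},p^\pm) > 0$, independent of $\mu$, such that
\begin{equation*}
\Lambda \geq c_0 > 0.
\end{equation*}
(The key point recorded in that proposition is that the lower bound obtained via the Poincar\'e-type estimate for the average $\bar u$ depends only on geometric quantities and the doubling constant, not on the energy level $\mu$.)

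Next I would apply the first chain of inequalities in Proposition \ref{relacion3}, which yields
\begin{equation*}
\lambda \geq \frac{p^-}{p^+}\,\Lambda
\end{equation*}
for each of the four pairs. Combining these two bounds gives
\begin{equation*}
\lambda \geq \frac{p^-}{p^+}\,c_0 =: c_1 > 0,
\end{equation*}
and since $c_1$ depends only on $|\Omega|$, $\mathbf{C}$ and $p^\pm$, it is indeed independent of the normalization $\mu = \Phi_{G,\Omega}(u)$. This bound applies uniformly to $\lambda_D$, $\lambda_R$, $\lambda_N$ and $\lambda_{\tilde N}$.

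There is essentially no obstacle here, since both ingredients are already proved; the only point to verify is that the inequality in Proposition \ref{relacion3} is aligned in the right direction (lower bound of $\lambda$ in terms of $\Lambda$) and that it is available for all four boundary regimes, both of which hold as stated. Thus the theorem follows as a short corollary and no further estimates are required.
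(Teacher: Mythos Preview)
Your proposal does not address the stated theorem at all. The statement in question is Ricceri's abstract three-critical-points theorem (Theorem~\ref{Ricceri}): given a separable reflexive Banach space $X$ and functionals $\mathcal{J},\mathcal{F}$ satisfying certain structural hypotheses, the equation $\mathcal{J}'(x)=\lambda\mathcal{F}'(x)+\mu\mathcal{H}'(x)$ admits at least three solutions with uniformly bounded norms. What you have written instead is a proof that the eigenvalues $\lambda_D,\lambda_R,\lambda_N,\lambda_{\tilde N}$ are bounded below by a positive constant independent of $\mu$, i.e.\ the short corollary stated at the very end of Section~\ref{sec.5}. The two statements are entirely unrelated: one is an abstract critical-point theorem in a general Banach space, the other is a specific lower bound for eigenvalues of the fractional $g$-Laplacian.

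For the record, the paper does \emph{not} prove Theorem~\ref{Ricceri}; it is quoted from \cite{Ricceri} and used as a black box in the proofs of Theorems~\ref{Three solution} and~\ref{teo2}. So there is no ``paper's own proof'' to compare against, and no proof is expected of you here. Your argument, while perfectly correct for the eigenvalue lower-bound corollary (and indeed identical to the paper's one-line justification ``As a consequence of Proposition~\ref{relacion2} and~\ref{relacion3}''), simply belongs to a different statement.
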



\begin{thebibliography}{99}


\bibitem{Nicola} N. Abatangelo, A remark on nonlocal Neumann conditions for the fractional Laplacian, arXiv:1712.00320.

\bibitem{AHK}
K. B. Ali, M. Hsini, K. Kefi and N. T. Chung, On a nonlocal fractional $p (.,.)$-Laplacian problem with competing nonlinearities. {\it Complex Analysis and Operator Theory}, {\bf 13} (3)(2019), 1377-1399.

\bibitem{Cianchi} A. Alberico, A. Cianchi, L. Pick and L. Slavíková, Fractional Orlicz-Sobolev embeddings, arXiv:2001.05565.

\bibitem{Cianchi2} A. Alberico, A. Cianchi, L. Pick and L. Slavíková, On the limit as $ s\to 0^+ $ of fractional Orlicz-Sobolev spaces, arXiv:2002.05449.

\bibitem{Azroul} E. Azroul, A. Benkirane and M.Srati, Existence of solutions for a nonlocal type problem in fractional Orlicz Sobolev spaces, {\it Advances in Operator Theory}, https://doi.org/10.1007/s43036-020-00042-0.


\bibitem{BR} A. Bahrouni and Radulescu, V. D. (2018). On a new fractional Sobolev space and applications to nonlocal variational problems with variable exponent. {\it Discrete and Continuous Dynamical Systems-S}, 11(3), 379.

\bibitem{Sabri1} S. Bahrouni, H. Ounaies and L. S. Tavares, Basic results of fractional Orlicz-Sobolev space and applications to non-local problems, {\it Topol. Methods Nonlinear Anal,} to appear.

\bibitem{sabri2} S. Bahrouni and H. Ounaies, Embedding theorems in the fractional Orlicz-Sobolev space and applications to non-local problems, {\it Discret and continuous Dynamical systems}, to appear.

\bibitem{sabri3} S. Bahrouni, Infinitely many solutions for problems in fractional Orlicz-Sobolev spaces, {\it Rochy mountain journal of mathematics}, to appear.

\bibitem{sabri4} A. Bahrouni, S. Bahrouni and M. Xiang, On a class of nonvariational problems in fractional Orlicz-Sobolev
spaces, {\it Nonlinear Analysis,} {\bf 190} (2020), 111595.

\bibitem{FBS} J. Fern\'andez Bonder and A. M. Salort, Fractional order Orlicz-Sobolev spaces, {\it Journal of Functional Analysis}, {\bf 277} (2) (2019), 333-367.

\bibitem{Salort3}  J. Fern\'andez Bonder, M. P\'erez LLanos and A. M. Salort, A H\"{o}lder infinity Laplacian obtained as limit of Orlicz fractional Laplacians, arXiv:1807.01669.

\bibitem{CRS} L. Caffarelli, J.M. Roquejoffre and O. Savin, Nonlocal minimal surfaces,  {\it Comm. Pure Appl. Math}, {\bf 63} (2009), 1111-1144.

\bibitem{Vilasi} F. Cammaroto and L. Vilasi, Multiple solutions for a nonhomogeneous Dirichlet problem in Orlicz–Sobolev spaces, {\it Applied Mathematics and Computation}, {\bf 218} (2012), 11518–11527

\bibitem{DPRS} L.M. Del Pezzo, J.D. Rossi and A.M. Salort, Fractional eigenvalue problems that approximate Steklov eigenvalues. {\it Proc. R. Soc. Edinb. Sect. A}, {\bf148} (3)(2018), 499-516.

\bibitem{DPS} L. Del Pezzo and A. M. Salort, The first non-zero Neumann $p$-fractional eigenvalue, {\it Nonlinear Analysis: Theory, Methods and Applications}, {\bf118} (2015), 130-143.

\bibitem{DNFBS} P. De N\'apoli, J. Fern\'andez Bonder and A. Salort, A P{\'o}lya--Szeg{\"o} principle for general fractional Orlicz--Sobolev spaces, {\it Complex Variables and Elliptic Equations}, (2020), 1-23.

\bibitem{Dipierro} S. Dipierro, X. Ros-Oton and E. Valdinoci, Nonlocal problems with Neumann boundary conditions, {\it Rev. Mat. Iberoam}, {\bf 33}
(2) (2017), 377-416.

\bibitem{Winkert} S. El Manouni, H.Hajaiej and P. Winkert, Bounded solutions to nonlinear problems in $\RR^n$ involving the fractional Laplacian  depending on parameters, {\it Minimax Theory and its applications}, {\bf 2} (2) (2017), 265-283.

\bibitem{K} A. El Khalil, On the spectrum of Robin boundary p-Laplacian problem, {\it Moroccan Journal of Pure and Applied Analysis}, {\bf 5} (2) (2019), 279-293.

\bibitem{Fukagai} N. Fukagai, M. Ito and K. Narukawa, Positive solutions of quasilinear elliptic equations with critical Orlicz-Sobolev nonlinearity on $\mathbb{R}^{d}$, \textit{ Funkcialaj Ekvacioj}, {\bf49} (2006), 235-267.


\bibitem{KRV}
U. Kaufmann, J. Rossi and R. Vidal,
Fractional Sobolev spaces with variable exponents and fractional. Electron. {\it J. Qual. Theory Differ. Equ.} 2017, No. 76, 1-10.


\bibitem{FJK} A. Kufner, O. John and S. Fucik, Function spaces(Vol. 3), {\it Springer Science Business Media,} (1979).

\bibitem{Kristaly} A. Krist\'{a}ly, M. Mih\u{a}ilescu and V. R\u{a}dulescu, Two non-trivial solutions for a non-homogeneous Neumann problem: an Orlicz–Sobolev space setting, {\it Proceedings of the Royal Society of Edinburgh}, {\bf 139 A} (2009), 367-379.

\bibitem{Lamperti} J. Lamperti, On the isometries of certain function-spaces. {\it Pacific J. Math} 8.3 (1958): 459-466.

\bibitem{LL} E. Lindgren and P. Lindqvist, Fractional eigenvalues, {\it Calculus of Variations and Partial Differential Equations}, {\bf 49}(1-2) (2014), 795-826.

\bibitem{ML} D. Mugnai and E. P. Lippi, Neumann fractional $p$-Laplacian: Eigenvalues and existence results, {\it Nonlinear Analysis}, {\bf 188} (2019), 455-474.

\bibitem{Dimitri} D. Mugnai, A. Pinamonti and E. Vecchi, Towards a Brezis-Oswald-type result for fractional problems with Robin boundary conditions. {\it Calc. Var}, {\bf 59} (43) (2020). https://doi.org/10.1007/s00526-020-1708-8.

\bibitem{Nguyen} T.C. Nguyen, Three solutions for a class of nonlocalproblems in Orlicz-Sobolev spaces,  {\it Applied Mathematics and Computation}, {\bf 218} (23) (2013), 1257-1269.

\bibitem{RR} M. Rao and Z. Ren,  Applications of Orlicz spaces (Vol. 250). CRC Press. (2002).

\bibitem{Ricceri} B. Ricceri, A further three critical points theorem, {\it Nonlinear Analysis}, {\bf 71} (2009), 4151-4157.

\bibitem{Salort2} A. M. Salort, Eigenvalues and minimizers for a non-standard growth non-local operator, {\it Journal of Differential Equations}, {\bf 268} (9) (2020), 5413-5439.

\bibitem{SV} R. Servadei and E. Valdinoci, On the spectrum of two different fractional operators, {\it Proceedings of the Royal Society of Edinburgh Section A: Mathematics}, {\bf 144} (4) (2014), 831-855.

\bibitem{Bruno} B. Volzone, Symmetrization for fractional Neumann problems, {\it Nonlinear Analysis}, {\bf 147} (2016), 1-25.

\bibitem{W} M. Warma, The fractional Neumann and Robin type boundary conditions for the regional fractional $p$-Laplacian, {\it Nonlinear Differ. Equ. Appl}, {\bf23} (1) (2016).

\bibitem{Zeidler} E. Zeidler, Nonlinear functional analysis and its applications, vol. II/B Springer, (1985).

\end{thebibliography}
\end{document}